\newtheorem{lem}{Lemma}[section]
\newtheorem{teo}[lem]{Theorem}
\newtheorem{pro}[lem]{Proposition}
\newtheorem{cor}[lem]{Corollary}
\newtheorem{claim}[lem]{Claim}
\newtheorem*{con*}{Conjecture}
\newtheorem{Conj}{Conjecture}
\newtheorem{Problem}[Conj]{Problem}
\theoremstyle{definition}
\theoremstyle{remark}
\newtheorem*{rem*}{Remark}
\newcommand{\argu}{\hbox to 7truept{\hrulefill}}
\DeclareMathOperator{\D}{\mathcal D}
\DeclareMathOperator{\tor}{tor}
\newcommand{\FL}{\mathtt{FL}}
\DeclareMathOperator{\Crit}{Crit}
\DeclareMathOperator{\rk}{rk}
\DeclareMathOperator{\Stab}{Stab}
\DeclareMathOperator{\Tor}{Tor}
\newcommand{\myeq}[1]{\ensuremath{\stackrel{\text{#1}}{=}}}
\newcommand{\myle}[1]{\ensuremath{\stackrel{\text{#1}}{\leqslant}}}
\newcommand{\myges}[1]{\ensuremath{\stackrel{\text{#1}}{>}}}
\newcommand{\myge}[1]{\ensuremath{\stackrel{\text{#1}}{\geqslant}}}
\newcommand {\brk}{\overline \rk}
\newcommand{\bpi}{\overline{\pi}}
\newcommand{\Z}{\mathbb{Z}}
\newcommand{\F}{\mathbb{F}}
\newcommand{\Q}{\mathbb{Q}}
\DeclareMathOperator{\FP}{\mathtt{FP}}
\newcounter{marcocomments}
\newcounter{andreicomments}
 \date{\today}
\begin{document}
\title {Free groups are $L^2$-subgroup rigid}
\author{Andrei Jaikin-Zapirain}
 \address{Departamento de Matem\'aticas, Universidad Aut\'onoma de Madrid \and  Instituto de Ciencias Matem\'aticas, CSIC-UAM-UC3M-UCM}
\email{andrei.jaikin@uam.es}
 
\begin{abstract}
In this paper, we introduce the notion of $L^2$-subgroup rigid groups and demonstrate that free groups are $L^2$-subgroup rigid. As a consequence, we establish the equivalence between compressibility, inertness, strong inertness, and $L^2$-independence for a finitely generated subgroup of a free group, confirming  a conjecture  by Dicks and Ventura as well as the one by Antolin and Jaikin-Zapirain.
\end{abstract}

\maketitle

\section{Introduction} The study of finitely generated subgroups of a free group  
 is a classical topic in combinatorial group theory. The standard approach employs Stallings' graph theory techniques introduced in \cite{St83} (see also \cite{KM02}). However, it has been realized that these techniques have limitations in addressing problems related to the intersections of subgroups, as exemplified by the case of the Hanna Neumann Conjecture. The proofs of the conjecture by Friedman\cite{Fr15}  and Mineyev\cite{Mi12}, as well as a more recent proof by Jaikin-Zapirain\cite{Ja17} use different algebraic tools. These ideas were further developed by Antolin and Jaikin-Zapirain \cite{AJ22}, where the notion of $L^2$-independence was introduced. This led  to the proof of a generalized version of the Hanna Neumann conjecture for various families of groups \cite{BK23, FM23}. In this paper, we introduce the notion of $L^2$-subgroup rigid group, providing another powerful tool for proving new results on subgroups of groups possessing this property.

Let $G$ be a group and let $\mathcal U(G)$ denote the ring of affiliated operators of $G$  (see \cref{sect:l2} for the definitions).
For a left $\mathbb{Q}[G]$-module $M$, we define
\[
\tor_{\mathbb{Q}[G]}(M) \;=\; \bigcap \{ \ker \phi \;\colon\; \phi : M \to \mathcal U(G) \ \text{is a $\mathbb{Q}[G]$-homomorphism} \}.
\]

We will show in \cref{equivdef} that, in the case where $M$ is finitely generated, $\tor_{\mathbb{Q}[G]}(M)$ can equivalently be defined as
\[
\tor_{\mathbb{Q}[G]}(M) \;=\; \bigcap \{ \ker \phi \;\colon\; \phi : M \to \ell^2(G) \ \text{is a $\mathbb{Q}[G]$-homomorphism} \}.
\]

Moreover, if $G$ is torsion-free and satisfies the strong Atiyah conjecture over $\mathbb{Q}$ (which states that the division closure $\mathcal D(G)$ of $\mathbb{Q}[G]$ in $\mathcal U(G)$ is a division ring), then
\[
\tor_{\mathbb{Q}[G]}(M) \;=\; \bigcap \{ \ker \phi \;\colon\; \phi : M \to \mathcal D(G) \ \text{is a $\mathbb{Q}[G]$-homomorphism} \}.
\]

Denote by $\iota_M$ the canonical map
\[
\iota_M : M \longrightarrow \mathcal D(G) \otimes_{\mathbb{Q}[G]} M, 
\quad m \longmapsto 1 \otimes m.
\]
Then any $\mathbb{Q}[G]$-homomorphism $\phi : M \to \mathcal D(G)$ can be canonically decomposed as 
\[
\phi = \widetilde{\phi} \circ \iota_M,
\]
where $\widetilde{\phi} : \mathcal D(G) \otimes_{\mathbb{Q}[G]} M \to \mathcal D(G)$ is a $\mathcal D(G)$-homomorphism.  
Therefore,
\[
\tor_{\mathbb{Q}[G]}(M) \;=\; \ker(\iota_M).
\]

 We denote by $I_{\Q[G]}$ the augmentation ideal of $\Q[G]$ and if $H$ is a subgroup of $G$ we denote by ${}^G I_{\Q[H]}$ 
the left ideal of $\Q[G]$ generated by $I_{\Q[H]}$.  Let $L$ be a left $\Q[G]$-submodule of $I_{\Q[G]}$ such that $L/{}^G I_{\Q[H]}=\tor_{\Q[G]}(I_{\Q[G]}/{}^GI_{\Q[H]})$. A priori, the submodule $L$  has nothing to do with the subgroup structure of $G$. 
If  there exists a subgroup $\widetilde H$  such that  $L={}^GI_{\Q[\widetilde H]}$,
then $\widetilde H$ is unique and contains $H$ (see \cref{permutationmodule}), and we call this subgroup the {\bf $L^2$-closure} of $H$ in $G$.   
We say that $G$ is {\bf $L^2$-subgroup rigid} if for all finitely generated subgroups of $G$ its $L^2$-closure can be defined.
In this paper we will show that free groups are $L^2$-subgroup rigid.
\begin{teo}\label{main}
Free groups are $L^2$-subgroup rigid.
\end{teo}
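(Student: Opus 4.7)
The plan is to define $\widetilde H$ directly from $\tor_{\Q[G]}(M_H)$, reduce the nontrivial half of the theorem to an ``$L^2$-closed'' case, and handle that case by a Marshall Hall reduction to a free-factor situation. Writing $M_H:=I_{\Q[G]}/{}^GI_{\Q[H]}$ and $\pi\colon I_{\Q[G]}\to M_H$ for the canonical projection, set
\[
\widetilde H:=\{\,g\in G:\ g-1\in\pi^{-1}(\tor_{\Q[G]}(M_H))\,\}.
\]
The identities $g_1g_2-1=g_1(g_2-1)+(g_1-1)$ and $g^{-1}-1=-g^{-1}(g-1)$, together with the fact that $\pi^{-1}(\tor M_H)$ is a left $\Q[G]$-submodule of $I_{\Q[G]}$, show that $\widetilde H$ is a subgroup of $G$ containing $H$, and by construction ${}^GI_{\Q[\widetilde H]}\subseteq\pi^{-1}(\tor M_H)$, giving the containment $\tor M_H\supseteq{}^GI_{\Q[\widetilde H]}/{}^GI_{\Q[H]}$ for free.

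For the reverse containment, one first checks that $\widetilde{\widetilde H}=\widetilde H$: writing $M_{\widetilde H}:=I_{\Q[G]}/{}^GI_{\Q[\widetilde H]}$, the quotient map $M_H\twoheadrightarrow M_{\widetilde H}$ induces an isomorphism $\D(G)\otimes_{\Q[G]}M_H\cong\D(G)\otimes_{\Q[G]}M_{\widetilde H}$, because the kernel is the image of $\D(G)\otimes({}^GI_{\Q[\widetilde H]}/{}^GI_{\Q[H]})$, and this is zero since each generator $(h-1)+{}^GI_{\Q[H]}$ with $h\in\widetilde H$ already vanishes in $\D(G)\otimes M_H$ (being in $\tor M_H$). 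Passing from $H$ to $\widetilde H$ therefore reduces the theorem to the $L^2$-closed case $H=\widetilde H$: no $g\in G\setminus H$ has $g-1\in\pi^{-1}(\tor M_H)$, and one must show $\tor M_H=0$. As an orienting remark, the strong Atiyah conjecture for free groups (Linnell) forces $\rk K=\rk H$ for any subgroup $K\geq H$ with ${}^GI_{\Q[K]}/{}^GI_{\Q[H]}\subseteq\tor M_H$, which identifies $\widetilde H$ with a compression-type closure in the spirit of Dicks--Ventura.

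To handle the $L^2$-closed case, invoke Marshall Hall's theorem: there is a finite-index subgroup $G_0\leq G$ in which $H$ is a free factor, say $G_0=H*K$. The filtration ${}^GI_{\Q[H]}\subseteq{}^GI_{\Q[G_0]}\subseteq I_{\Q[G]}$ yields
\[
0\to{}^GI_{\Q[G_0]}/{}^GI_{\Q[H]}\to M_H\to I_{\Q[G]}/{}^GI_{\Q[G_0]}\to 0,
\]
whose first term is $\Q[G]$-free of rank $\rk K$ (the free-product decomposition gives $I_{\Q[G_0]}={}^{G_0}I_{\Q[H]}\oplus{}^{G_0}I_{\Q[K]}$, and induction from $\Q[G_0]$ to $\Q[G]$ preserves freeness), hence torsion-free; the third term, the augmentation ideal of the permutation module $\Q[G/G_0]$, is a finite-dimensional $\Q$-vector space and is entirely $\D(G)$-torsion (the annihilator of any element is a nonzero left ideal of $\Q[G]$ and hence generates all of the division ring $\D(G)$). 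Consequently $\tor M_H$ embeds as a $\Q[G]$-submodule of the finite-dimensional $I_{\Q[G]}/{}^GI_{\Q[G_0]}$.

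The crux, and the step I expect to be the main obstacle, is to conclude from the $L^2$-closedness of $H$ that this embedded copy of $\tor M_H$ is zero. The strategy is that a hypothetical nontrivial torsion element should, via the free-factor structure $G_0=H*K$ and the $G$-action on $\Q[G/G_0]$, produce some $g\in G\setminus H$ with $g-1\in\pi^{-1}(\tor M_H)$, contradicting $L^2$-closedness. Carrying this out will plausibly combine the $L^2$-independence machinery from \cite{AJ22}, the Atiyah-conjecture structure of $\D(G)$ (noting in particular that $\D(G)$ is not flat over $\Q[G]$ for nonabelian free $G$, so direct Tor arguments are unavailable), and the Nielsen--Schreier analysis of subgroups afforded by Marshall Hall's theorem.
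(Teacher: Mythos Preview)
Your proposal has two substantive problems beyond the final step you already flag as incomplete. First, the claimed embedding $\tor M_H\hookrightarrow I_{\Q[G]}/{}^GI_{\Q[G_0]}$ does not follow from your argument. You write that $A={}^GI_{\Q[G_0]}/{}^GI_{\Q[H]}$ is free ``hence torsion-free'', and conclude $A\cap\tor M_H=0$. But freeness of $A$ only gives injectivity of $A\to\D(G)\otimes_{\Q[G]}A$; what you need is injectivity of the composite $A\hookrightarrow M_H\to\D(G)\otimes_{\Q[G]}M_H$, and the kernel of $\D(G)\otimes A\to\D(G)\otimes M_H$ is precisely the image of the connecting map from $\Tor_1^{\Q[G]}(\D(G),B)$, a space of dimension $([G:G_0]-1)(\rk G-1)$, nonzero whenever $H$ is not already a free factor of $G$. (Equivalently: $\D(G)$ is \emph{not} injective over $\Q[G]$ for $\rk G\ge2$, so maps $A\to\D(G)$ need not extend to $M_H$.) Second, after replacing $H$ by $\widetilde H$ you invoke Marshall Hall, which requires $\widetilde H$ to be finitely generated; this is true a posteriori, since the theorem yields $\rk\widetilde H=\bpi(H\le G)$, but you have not established it independently. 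Most importantly, the step you yourself call ``the main obstacle'' is essentially the whole theorem: there is no evident mechanism by which a nonzero finite-dimensional $\Q[G]$-submodule of $M_H$ would manufacture some $g\notin H$ with $\pi(g-1)$ torsion, since finite-dimensional representations of a free group are arbitrary and need not contain group-like elements in any useful sense.

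The paper's proof proceeds along an entirely different line. It first establishes the characteristic-$2$ analogue (Theorem~\ref{mainF2}) using the special fact that over $\F_2$ every codimension-one $\F_2[F]$-submodule of $I_{\F_2[F]}$ has the form ${}^FI_{\F_2[U]}$ for an index-$2$ subgroup $U$ (Lemma~\ref{maxsubm}, where $|\F_2^\times|=1$ is essential), and runs an induction organized by Takahasi's finiteness of algebraic extensions (Proposition~\ref{Afinite}). The passage from $\F_2$ to $\Q$ then uses the inequality $\beta_0^{\Q[F]}\le\beta_0^{\F_2[F]}$ together with an Euler-characteristic comparison (Corollary~\ref{equalitypQEuler}, Proposition~\ref{positiveEuler}) to show that the $\D_{\F_2[F]}$-closure of $H$ is also its $L^2$-closure. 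No Marshall Hall reduction or finite-index free-factor argument is used.
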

It is very plausible that our proof can be extended to show that not only free groups, but also other groups are $L^2$-subgroup rigid. We have chosen to explore this aspect in a forthcoming paper and, for simplicity, focus here only on the case of free groups. As we will see later, even in the case of free groups, we already can derive several important consequences.

A finitely generated subgroup $H$ of a free group $F$ is called {\bf compressed} if for any finitely generated subgroup $L$ of $F$ containing $H$, $\rk(H)\le \rk(L)$, and it is called {\bf inert} if for any finitely generated subgroup $L$ of $F$, $\rk(H\cap L)\le \rk(L)$. 

The concept of a subgroup being compressed is quite intuitive; it indicates that the subgroup is not contained in another subgroup of smaller rank. The concept of an inert subgroup was introduced in the work of Dicks and Ventura while they were studying equalizers in free groups \cite{DV96}.

We also say that $H$ is {\bf $L^2$-independent} in $F$  if 
 the canonical map $$\D(F)\otimes_{\Q[H]}I_{\Q[H]}\to \D(F)\otimes_{\Q[F]}I_{\Q[F]}$$ is injective. 
 It is clear that   inert subgroups are compressed and Antolin and Jaikin-Zapirain \cite{AJ22} proved that $L^2$-independent subgroups are inert. On the other hand, Dicks and Ventura \cite[Problem 1]{DV96} (see also \cite[Problem 19.102]{Kou}) asked whether  compressed subgroups are also inert and Antolin and Jaikin-Zapirain \cite[Question 2]{AJ22}  suggested that compressed  subgroups may  always be  $L^2$-independent. We confirm these two conjectures.
\begin{cor}\label{inertcompressed}
Let $F$ be a free group and $H$ a finitely generated subgroup. Then the following are equivalent:
\begin{enumerate}
\item $H$ is compressed in $F$;
\item $H$ is inert in $F$;
\item $H$ is $L^2$-independent in $F$.
\end{enumerate}
\end{cor}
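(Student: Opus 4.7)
The plan is to combine Theorem~\ref{main} with the previously known implications. By the main result of \cite{AJ22}, $L^2$-independence implies inertness, so (3)$\Rightarrow$(2) is available. The implication (2)$\Rightarrow$(1) will be immediate from the definitions: for any finitely generated $L\supseteq H$ the intersection $H\cap L$ equals $H$, and inertness says this is compressed in $L$, giving $\rk(H)\le\rk(L)$. So the whole content lies in (1)$\Rightarrow$(3), and my task is to derive it from Theorem~\ref{main}.

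First I would assume $H$ is compressed in $F$ and apply Theorem~\ref{main} to obtain $\widetilde H\supseteq H$ with
\[
{}^F I_{\Q[\widetilde H]}/{}^F I_{\Q[H]}=\tor_{\Q[F]}(I_{\Q[F]}/{}^F I_{\Q[H]}).
\]
Set $V:=\D(F)\otimes_{\Q[F]}I_{\Q[F]}$ and, for a subgroup $K\le F$, let $V_K\subseteq V$ be the $\D(F)$-span of the image of ${}^FI_{\Q[K]}$. Since elements of the torsion submodule become zero after tensoring with $\D(F)$, the equality $V_H=V_{\widetilde H}$ will hold; equivalently, under the injection $I_{\Q[F]}\hookrightarrow V$, one has ${}^FI_{\Q[\widetilde H]}=I_{\Q[F]}\cap V_H$. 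For finitely generated $K\le F$, $I_{\Q[K]}$ is $\Q[K]$-free of rank $\rk(K)$, so $\D(F)\otimes_{\Q[K]}I_{\Q[K]}\cong \D(F)^{\rk(K)}$ and $\dim_{\D(F)}V_K\le \rk(K)$, with equality exactly when $K$ is $L^2$-independent in $F$.

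The key step will be to show that $\widetilde H$ is itself $L^2$-independent, that is, $\dim_{\D(F)}V_{\widetilde H}=\rk(\widetilde H)$. Once this is in hand, the conclusion drops out immediately: the chain
\[
\rk(\widetilde H)=\dim_{\D(F)}V_{\widetilde H}=\dim_{\D(F)}V_H\le \rk(H),
\]
combined with compressedness of $H$ (which gives $\rk(H)\le\rk(\widetilde H)$ via $H\le\widetilde H$), forces all three quantities to coincide, whence $\dim_{\D(F)}V_H=\rk(H)$ and $H$ is $L^2$-independent.

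The hard part will be proving that $\widetilde H$ is $L^2$-independent. A preliminary remark is that $I_{\Q[F]}/{}^FI_{\Q[\widetilde H]}$, being the quotient of $I_{\Q[F]}/{}^FI_{\Q[H]}$ by its own $\tor_{\Q[F]}$-submodule, is $\tor_{\Q[F]}$-free; the goal is to upgrade this elementwise torsion-freeness to the vanishing of $\Tor_1^{\Q[F]}(\D(F),I_{\Q[F]}/{}^FI_{\Q[\widetilde H]})$, which, since $I_{\Q[F]}$ is a free $\Q[F]$-module, is exactly $L^2$-independence for $\widetilde H$. A naive Sylvester-rank argument will not suffice, because free $\Q[F]$-modules can embed in free $\Q[F]$-modules of strictly smaller rank (for example $I_{\Q[F]}\subseteq \Q[F]$). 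My plan is instead to revisit the proof of Theorem~\ref{main} and exploit the saturation identity ${}^FI_{\Q[\widetilde H]}=I_{\Q[F]}\cap V_H$ inside $V$ — together with a maximality property of $\widetilde H$ among subgroups with this span — in order to read off $\rk(\widetilde H)=\dim_{\D(F)}V_H$ directly, thereby closing the argument.
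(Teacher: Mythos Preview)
Your overall architecture is exactly the paper's: obtain the $L^2$-closure $\widetilde H$ from Theorem~\ref{main}, show $\widetilde H$ is $L^2$-independent, and then compare ranks using compressedness. However, you have left the crucial step---that $\widetilde H$ is $L^2$-independent---as an unexecuted ``plan'', and your proposed route (revisiting the proof of Theorem~\ref{main} and exploiting a maximality property of $\widetilde H$) is both vague and unnecessary.

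The point you are missing is that for finitely presented $\Q[F]$-modules, $\D_{\Q[F]}$-torsion-freeness already \emph{implies} $\beta_1^{\Q[F]}=0$; there is nothing to ``upgrade''. This is Property~(P3) of $\D_{\Q[F]}$: any $\Q[F]$-submodule of $\D_{\Q[F]}$ has vanishing $\beta_1$, and by an easy induction the same holds for submodules of $\D_{\Q[F]}^{\,n}$. Since $I_{\Q[F]}/{}^FI_{\Q[\widetilde H]}$ is torsion-free, it embeds in $\D_{\Q[F]}\otimes_{\Q[F]}(I_{\Q[F]}/{}^FI_{\Q[\widetilde H]})\cong \D_{\Q[F]}^{\,n}$, whence $\beta_1^{\Q[F]}(I_{\Q[F]}/{}^FI_{\Q[\widetilde H]})=0$. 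In the paper this is packaged in Proposition~\ref{proptor}, which records $\beta_1^{\Q[F]}(M)=\beta_1^{\Q[F]}(\tor_{\Q[F]}(M))$; apply it to $M=I_{\Q[F]}/{}^FI_{\Q[\widetilde H]}$. With this in hand, your chain $\rk(\widetilde H)=\dim_{\D(F)}V_{\widetilde H}=\dim_{\D(F)}V_H\le\rk(H)\le\rk(\widetilde H)$ goes through exactly as you wrote. The paper phrases the same computation as Theorem~\ref{pibar}, deriving $\bpi(H\le F)=\rk(H)-\beta_1^{\Q[F]}(I_{\Q[F]}/{}^FI_{\Q[H]})$, from which $(1)\Rightarrow(3)$ is a one-line consequence.
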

In \cref{strcom} we will also show that these three properties of $H$  are also equivalent of $H$ being {\it strongly inert} in $F$ (see  \cref{l2ind} for the definition).

It is clear that the intersection of two inert subgroups is inert and by \cite[Theorem 3.9]{MVW07}, it is 	algorithmically decidable,  for a finitely generated subgroup of a free group, whether it is compressed. In \cite{Iv18} Ivanov proved   that it is algorithmically decidable whether a finitely generated subgroup of a free
group is strongly inert or not. The equivalence between inert and compressed subgroup provides us with the following immediate consequences.

\begin{cor} The following holds.
\begin{enumerate}
\item \cite[Problem 19.103]{Kou} The intersection of two compressed subgroups of a free group is again compressed.
\item   \cite[Problem 19.104]{Kou} It is algorithmically decidable, for a finitely generated subgroup of a free group, whether it is inert.
\end{enumerate}
\end{cor}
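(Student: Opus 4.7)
The plan is to deduce both items directly from Corollary \ref{inertcompressed}, together with Howson's theorem and the cited decision procedure of Meakin--Miasnikov--Ventura--Weil (Theorem 3.9 of \cite{MVW07}), so essentially no new work beyond bookkeeping is required.

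For item (1), I would start from two finitely generated compressed subgroups $H_1,H_2$ of a free group $F$. By Corollary \ref{inertcompressed}, each $H_i$ is inert. I would then verify the standard closure fact that the intersection of two inert subgroups is inert. Let $L\le F$ be any finitely generated subgroup; by Howson's theorem $H_2\cap L$ is finitely generated, so inertness of $H_2$ gives that $H_2\cap L$ is compressed in $L$, and inertness of $H_1$ applied to the finitely generated subgroup $H_2\cap L$ gives that $H_1\cap H_2\cap L=H_1\cap(H_2\cap L)$ is compressed in $H_2\cap L$. Since compressedness is transitive along chains of subgroups ($A\subseteq B\subseteq L$ with $A$ compressed in $B$ and $B$ compressed in $L$ forces $A$ compressed in $L$), it follows that $H_1\cap H_2\cap L$ is compressed in $L$. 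Hence $H_1\cap H_2$ is inert, and in particular compressed: for any finitely generated subgroup $K$ containing $H_1\cap H_2$, apply the inertness conclusion with $L=K$ to get $\rk(H_1\cap H_2)\le \rk(K)$, while the case of infinitely generated $K$ is automatic.

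For item (2), the argument is even shorter. Given a finitely generated subgroup $H$ of $F$, Corollary \ref{inertcompressed} asserts that $H$ is inert if and only if $H$ is compressed. Since the property of being compressed is algorithmically decidable by \cite[Theorem 3.9]{MVW07}, the same algorithm decides inertness.

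The only place where there is any substance is Corollary \ref{inertcompressed} itself, which in turn rests on Theorem \ref{main}; granted those, the present statement is formal. The main (minor) point to keep in mind is the appeal to Howson's theorem to ensure that the intermediate subgroup $H_2\cap L$ used in the transitivity argument for (1) is finitely generated, so that the inertness hypothesis for $H_1$ can be legitimately applied to it.
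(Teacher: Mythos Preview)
Your argument is correct and matches the paper's approach: the paper simply records that the intersection of two inert subgroups is inert and that compressedness is decidable by \cite[Theorem 3.9]{MVW07}, so both items follow from Corollary \ref{inertcompressed}.

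One small comment on your write-up of (1): you invoke ``transitivity of compressedness'' ($A$ compressed in $B$ and $B$ compressed in $L$ implies $A$ compressed in $L$) as a general fact. This is \emph{not} elementary a priori---an intermediate overgroup $M$ of $A$ in $L$ need not be comparable to $B$---and indeed it is essentially another consequence of Corollary \ref{inertcompressed} rather than an independent ingredient. The standard (and shorter) route is to use the equivalent formulation of inertness, namely $\rk(H\cap L)\le \rk(L)$ for every finitely generated $L\le F$: then Howson gives that $H_2\cap L$ is finitely generated, and
\[
\rk\big((H_1\cap H_2)\cap L\big)=\rk\big(H_1\cap(H_2\cap L)\big)\le \rk(H_2\cap L)\le \rk(L),
\]
which is exactly the paper's ``it is clear'' step. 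Your item (2) is fine as written.
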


A finitely generated subgroup $H$  of a free group $F$ is called {\bf strictly compressed} in $F$ if for any subgroup $  L$ of $F$, properly containing $H$, $\rk(H)< \rk(L)$ and {\bf strictly inert}
  if for any subgroup $L$ not contained in $H$, we have
$\rk(L\cap H) < \rk(L)$. 
We say that $H$ is {\bf $L^2$-closed} in $G$ if it coincides with its own $L^2$-closure. Another consequence of \cref{main} is the following equivalence.
\begin{cor}\label{strictlycompressed}
Let $F$  a free group and $H$ a finitely generated subgroup of $F$. Then the following are equivalent.
\begin{enumerate}
\item $H$ is strictly compressed in $F$;
\item $H$ is strictly inert in $F$;
\item  $H$ is $L^2$-closed in $F$.
\end{enumerate}
\end{cor}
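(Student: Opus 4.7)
The plan is to route through $(1)\Leftrightarrow(3)$, observing that $(2)\Rightarrow(1)$ is immediate by taking $L=F$, and then derive $(3)\Rightarrow(2)$ by showing that $L^2$-closedness passes to intersections with arbitrary finitely generated subgroups $L\leq F$; strict compressedness of $L\cap H$ inside $L$ will then follow by applying the already-established $(1)\Leftrightarrow(3)$ inside the free group $L$.

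For $(1)\Leftrightarrow(3)$, Theorem~\ref{main} produces the $L^2$-closure $\widetilde H$, which is finitely generated because the torsion piece ${}^F I_{\Q[\widetilde H]}/{}^F I_{\Q[H]}$ is finitely generated as a $\Q[F]$-module (a subquotient of $I_{\Q[F]}/{}^F I_{\Q[H]}$). The key identity $\rk(\widetilde H)=\rk(H)$ would come from a $\D(F)$-dimension count on the short exact sequence
$$0\to {}^F I_{\Q[\widetilde H]}/{}^F I_{\Q[H]}\to I_{\Q[F]}/{}^F I_{\Q[H]}\to I_{\Q[F]}/{}^F I_{\Q[\widetilde H]}\to 0,$$
exploiting that the leftmost torsion term dies after tensoring with $\D(F)$ while the $L^2$-independence of both $H$ and $\widetilde H$ (via Corollary~\ref{inertcompressed}) converts the other two dimensions into $\rk(F)-\rk(H)$ and $\rk(F)-\rk(\widetilde H)$. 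With this identity, strict compressedness of $H$ rules out $\widetilde H\supsetneq H$; conversely, $L^2$-closedness of $H$ rules out any proper overgroup $L\supsetneq H$ with $\rk(L)=\rk(H)$, for such an $L$ is automatically compressed (every $M\supseteq L$ contains $H$), hence $L^2$-independent by Corollary~\ref{inertcompressed}, and the same dimension count places ${}^F I_{\Q[L]}/{}^F I_{\Q[H]}$ inside $\tor_{\Q[F]}(I_{\Q[F]}/{}^F I_{\Q[H]})={}^F I_{\Q[\widetilde H]}/{}^F I_{\Q[H]}=0$, forcing $L\subseteq H$, a contradiction.

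For $(3)\Rightarrow(2)$, given a finitely generated $L\leq F$ (so that $L\cap H$ is finitely generated by the Howson property), I would form the $L^2$-closure $K$ of $L\cap H$ in $L$ (available by Theorem~\ref{main} since $L$ is free) and aim to prove $K=L\cap H$. Since $K\leq L$, one has $K\cap H=L\cap H$. Set $M:=\langle K, H\rangle$; using ${}^F I_{\Q[M]}={}^F I_{\Q[K]}+{}^F I_{\Q[H]}$ together with ${}^F I_{\Q[L\cap H]}\subseteq {}^F I_{\Q[K]}\cap {}^F I_{\Q[H]}$ one gets a surjection
$${}^F I_{\Q[K]}/{}^F I_{\Q[L\cap H]}\twoheadrightarrow {}^F I_{\Q[M]}/{}^F I_{\Q[H]}.$$
The source is $L^2$-torsion over $\Q[F]$: by definition of $K$, ${}^L I_{\Q[K]}/{}^L I_{\Q[L\cap H]}$ is $L^2$-torsion over $\Q[L]$, and induction $\Q[F]\otimes_{\Q[L]}-$ preserves $L^2$-torsion via $\D(L)\hookrightarrow\D(F)$, which gives $\D(F)\otimes_{\Q[L]} N=\D(F)\otimes_{\D(L)}(\D(L)\otimes_{\Q[L]} N)=0$ for any $L^2$-torsion $\Q[L]$-module $N$. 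Hence the target is $L^2$-torsion, and $L^2$-closedness of $H$ forces ${}^F I_{\Q[M]}={}^F I_{\Q[H]}$, i.e., $M=H$; then $K\subseteq L\cap H$, so $K=L\cap H$. Thus $L\cap H$ is $L^2$-closed in $L$, and applying $(1)\Leftrightarrow(3)$ inside $L$ yields the strict compressedness of $L\cap H$ in $L$.

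The main obstacle I anticipate is the rank identity $\rk(\widetilde H)=\rk(H)$, which rests on $\widetilde H$ being $L^2$-independent. I would deduce this either via Corollary~\ref{inertcompressed} after separately confirming compressedness of $\widetilde H$, or directly from the torsion-freeness of $I_{\Q[F]}/{}^F I_{\Q[\widetilde H]}$ combined with a $\Tor_1^{\Q[F]}(\D(F),-)$-vanishing statement on torsion-free $\Q[F]$-modules, a property specific to Hughes-free division rings over free groups. Once this point is settled, the induction-of-$L^2$-torsion observation makes the descent to intersections in $(3)\Rightarrow(2)$ essentially formal.
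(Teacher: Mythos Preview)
Your proposal is correct, with one small point to tighten and a genuinely different route for $(3)\Rightarrow(2)$.

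\textbf{The small point.} In your $(3)\Rightarrow(1)$ you only exclude proper overgroups $L$ with $\rk(L)=\rk(H)$, and your claim that such an $L$ is ``automatically compressed (every $M\supseteq L$ contains $H$)'' presupposes that $H$ itself is compressed. Both issues disappear once you observe, as your final paragraph suggests for $\widetilde H$, that $L^2$-closedness of $H$ means $I_{\Q[F]}/{}^FI_{\Q[H]}$ is $\D_{\Q[F]}$-torsion-free, so $\beta_1^{\Q[F]}(I_{\Q[F]}/{}^FI_{\Q[H]})=0$ by Property~(P3); hence $H$ is $L^2$-independent and in particular compressed. With this in hand your argument goes through. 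The paper's version of $(3)\Rightarrow(1)$ is slightly more direct: for any finitely generated $L\supsetneq H$ one has $\rk(L)-\rk(H)=\chi^{\Q[F]}({}^FI_{\Q[L]}/{}^FI_{\Q[H]})>0$, the strict inequality coming from Proposition~\ref{positiveEuler} applied to a nonzero submodule of the torsion-free module $I_{\Q[F]}/{}^FI_{\Q[H]}$.

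\textbf{Different route for $(3)\Rightarrow(2)$.} The paper's argument is shorter and works for an arbitrary subgroup $U\le F$: since $\D_{\Q[U]}\hookrightarrow\D_{\Q[F]}$, the $\D_{\Q[F]}$-torsion-free $\Q[F]$-module $I_{\Q[F]}/{}^FI_{\Q[H]}$ is a fortiori $\D_{\Q[U]}$-torsion-free as a $\Q[U]$-module, and $I_{\Q[U]}/{}^UI_{\Q[U\cap H]}$ sits inside it as a $\Q[U]$-submodule, so $U\cap H$ is $L^2$-closed in $U$. In other words, the paper restricts torsion-freeness downwards. Your argument instead inducts torsion upwards: you take the $L^2$-closure $K$ of $L\cap H$ in $L$, push the $\D_{\Q[L]}$-torsion module ${}^LI_{\Q[K]}/{}^LI_{\Q[L\cap H]}$ up to $F$ (still torsion, via $\D_{\Q[F]}\otimes_{\D_{\Q[L]}}(\D_{\Q[L]}\otimes_{\Q[L]}-)$), surject onto ${}^FI_{\Q[\langle K,H\rangle]}/{}^FI_{\Q[H]}$, and use $L^2$-closedness of $H$ to force $K\le H$. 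This is correct and conceptually dual to the paper's approach, but more elaborate; as written it also needs $L$ finitely generated to invoke Howson, whereas the paper's restriction argument does not.
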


We also obtain  the following surprising property of the lattice of finitely generated subgroups of free groups.
\begin{cor}\label{crit}
Let $F$ be a free group and $H$ a finitely generated subgroup. Define
\begin{multline*}
\bpi(H\leq F)=\min\{\rk(L)\colon H\le L\le F\} \textrm{\ and\ }\\ \Crit(H\leq F)=\{H\leq L\leq F:  \rk(L) =\bpi(H\leq F)\}.
\end{multline*}
If  $L_1$ and $L_2$ belong to $\Crit(H\leq F)$, then $L_1\cap L_2$ and $\langle L_1,L_2\rangle$ also belong to $\Crit(H\leq F)$. \end{cor}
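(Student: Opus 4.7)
The approach is to reduce both assertions to a single submodular rank inequality, which becomes available once the compressed subgroups $L_1, L_2$ are recognized as strongly inert via Corollary \ref{strcom}.

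First observe that every $L\in\Crit(H\leq F)$ is compressed in $F$: any subgroup $K$ with $L\leq K\leq F$ contains $H$, so $\rk(K)\geq \bpi(H\leq F)=\rk(L)$. Combining Corollary \ref{inertcompressed} with Corollary \ref{strcom}, both $L_1$ and $L_2$ are therefore strongly inert in $F$. The key input I will use is the submodular rank inequality provided by strong inertness: applied to $L_1$ and the finitely generated subgroup $L_2$, it gives
\[
\rk(L_1\cap L_2)\,+\,\rk(\langle L_1,L_2\rangle)\;\leq\;\rk(L_1)+\rk(L_2)\;=\;2\,\bpi(H\leq F).
\]
For the reverse inequality, note that both $L_1\cap L_2$ and $\langle L_1,L_2\rangle$ contain $H$, hence each has rank at least $\bpi(H\leq F)$; summing these two bounds yields $2\,\bpi(H\leq F)$ on the left-hand side. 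Comparing the two inequalities, equality must hold in each, so $\rk(L_1\cap L_2)=\rk(\langle L_1,L_2\rangle)=\bpi(H\leq F)$, which is exactly the statement that both subgroups lie in $\Crit(H\leq F)$.

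\textbf{Expected obstacle.} The delicate point is the asymmetry between the two halves of the statement. Plain inertness of $L_2$ applied to $L_1$ only produces the one-sided bound $\rk(L_1\cap L_2)\leq \rk(L_1)=\bpi(H\leq F)$; this is sufficient to handle the intersection, but it gives no upper bound on $\rk(\langle L_1,L_2\rangle)$, since $L_1$ being compressed only controls ranks of overgroups of $L_1$ from below. What truly upgrades the argument is the \emph{additive} rank inequality, and this is precisely what strong inertness delivers. Hence Corollary \ref{strcom}, upgrading the chain of equivalences in Corollary \ref{inertcompressed} to include strong inertness, is genuinely indispensable here: without it one would need to show directly that the join $\langle L_1,L_2\rangle$ is $L^2$-independent in $F$, and this does not appear to follow from the $L^2$-independence of the individual pieces $L_1$, $L_2$ and $L_1\cap L_2$ by any purely dimensional manipulation inside $\D(F)\otimes_{\Q[F]} I_{\Q[F]}$.
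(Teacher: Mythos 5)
There is a genuine gap in the join half of your argument. Your ``key input'' --- the submodular inequality $\rk(L_1\cap L_2)+\rk(\langle L_1,L_2\rangle)\le \rk(L_1)+\rk(L_2)$ --- is not what strong inertness says. By the paper's definition, strong inertness of $L_1$ applied to $U=L_2$ gives only $\sum_{x\in L_2\backslash F/L_1}\brk(L_2\cap xL_1x^{-1})\le \brk(L_2)$, i.e.\ control on intersections of conjugates; it says nothing about the rank of the join. No choice of $U$ produces an \emph{upper} bound on $\rk(\langle L_1,L_2\rangle)$ from this definition (taking $U=\langle L_1,L_2\rangle$ only yields a lower bound on the join's rank, which you already have for free from $H\le\langle L_1,L_2\rangle$). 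Moreover, the submodular inequality is false for general finitely generated subgroups of a free group: for two distinct index-$2$ subgroups $L_1,L_2$ of $F_2$ one has $\rk(L_1\cap L_2)=5$, $\rk(\langle L_1,L_2\rangle)=2$, while $\rk(L_1)+\rk(L_2)=6$. So it is not a background fact you can invoke; it would itself require proof for compressed $L_1,L_2$, and that is exactly the nontrivial content here. (Your treatment of the intersection, via plain inertness plus $H\le L_1\cap L_2$, is fine and matches the paper.)

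For the join the paper argues at the level of modules rather than ranks of subgroups. Since ${}^FI_{\Q[L]}={}^FI_{\Q[L_1]}+{}^FI_{\Q[L_2]}$ for $L=\langle L_1,L_2\rangle$, the invariant $\beta_0^{\Q[F]}({}^FI_{\Q[L]}/{}^FI_{\Q[H]})$ is bounded by the sum of the corresponding $\beta_0$'s for $L_1$ and $L_2$, each of which vanishes because $L_i\in\Crit(H\le F)$ (this uses Theorem \ref{pibar} and Lemma \ref{0tor1}). Combining this vanishing with the Euler characteristic computation $\rk(L)=\rk(H)+\chi^{\Q[F]}({}^FI_{\Q[L]}/{}^FI_{\Q[H]})$ and the monotonicity of $\beta_1$ then gives $\rk(L)\le\bpi(H\le F)$. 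If you want to salvage your approach, you would need to supply an argument of this kind (or some other proof of the additive inequality for compressed subgroups); it does not come from the definition of strong inertness.
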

We will see that the maximal  subgroup in $\Crit(H\leq F)$   is the $L^2$-closure of $H$. 
  
The paper is organized as follows. In \cref{prelim}, we describe preliminary results. The main step of our proof is found in \cref{casep=2}, where we establish an analogue of \cref{main} in characteristic $p = 2$. In this section, we explain why the prime $p = 2$ is particularly suitable for our argument. The case of characteristic 0 follows as a consequence of the case $p = 2$.
In the last section, we propose several open questions for further research.
 
 \section*{Acknowledgments}This work is partially supported by grant PID2020-114032GB-I00 from the Ministry of Science and Innovation of Spain. I am very grateful to Dario Ascari, Marco Linton, Ismael Morales, Malika Roy, Pablo Sánchez, Dmitry Piontkovski, Henrique Souza, Enric Ventura, and anonymous referees for their valuable comments on earlier versions of this paper.
 \section{Preliminaries}\label{prelim}

\subsection{General results}
All of our rings $R$ will be associative and unitary. All ring homomorphisms will map $1\mapsto 1$. We will always assume that $R$ has {\bf invariant basis number} (this is, if the isomorphism of left $R$-modules $R^n\cong R^m$ implies $n = m$). The previous is ensured,
for instance, when the ring $R$ admits a ring homomorphism to a field.  We will write $\rk(M)=n$ if $M$ is a left  $R$-module isomorphic to $R^n$.

By an {\bf $R$-ring} we understand a  ring homomorphism $\varphi: R\to S$. We will often refer to $S$ as $R$-ring and omit the homomorphism $\varphi$ if $\varphi$ is clear from the context.
Two $R$-rings $\varphi_1:R\to S_1$ and $\varphi_2:R\to S_2$ are said to be {\bf isomorphic} if there exists a ring isomorphism $\alpha: S_1\to S_2$ such that $\alpha\circ \varphi_1=\varphi_2$.

If $F$ is a finitely generated free group, we denote by $\rk(F)$ the cardinality of free generators of $F$ and define $\brk(F) = \max\{\rk(F) - 1, 0\}$. In other words, $\brk(F)$ is typically $\rk(F) - 1$, except when $F$ is trivial, in which case $\brk(F) = 0$.
\subsection{Left ideals associated with subgroups}
In the introduction, we have defined the left ideal ${}^GI_{k[H]}$ of $k[G]$. 
It uniquely determines the subgroup $H$.

\begin{lem}\label{permutationmodule}
Let $H$ and $ T$ be subgroups of a group $G$ and $k$ a commutative ring.  If ${}^GI_{k[H]}\le {}^GI_{k[T]}$, then $H\le T$. In particular, if ${}^GI_{k[H]}= {}^GI_{k[T]}$, then $H=T$.
\end{lem}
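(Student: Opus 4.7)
The plan is to identify the quotient $k[G]/{}^GI_{k[H]}$ with the permutation module on the coset space $G/H$ and then read the desired containment off the $G$-action.

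First I would verify that $k[G]/{}^GI_{k[H]} \cong k[G/H]$ as left $k[G]$-modules, where $k[G/H]$ denotes the free $k$-module on the left cosets of $H$ in $G$ with the canonical $G$-action. The map $\psi: k[G] \to k[G/H]$, $g \mapsto gH$, is a surjective $k[G]$-homomorphism, and since $g(h-1) = gh - g \mapsto gH - gH = 0$ for every $g\in G$ and $h\in H$, the left ideal ${}^GI_{k[H]}$ lies in $\ker\psi$. Conversely, the assignment $gH \mapsto g + {}^GI_{k[H]}$ is well-defined (because $gh - g \in {}^GI_{k[H]}$) and provides a two-sided inverse on the quotient, so $\ker\psi = {}^GI_{k[H]}$.

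Next, the hypothesis ${}^GI_{k[H]} \le {}^GI_{k[T]}$ yields a canonical $G$-equivariant surjection $\phi: k[G/H] \twoheadrightarrow k[G/T]$ with $\phi(H) = T$. For any $h \in H$, the equality $h\cdot H = H$ in $G/H$ gives
\[
T = \phi(H) = \phi(h \cdot H) = h \cdot \phi(H) = h\cdot T = hT.
\]
Since the distinct cosets form a $k$-basis of $k[G/T]$, the equality $hT = T$ in $k[G/T]$ forces $hT = T$ as cosets, i.e.\ $h \in T$. Hence $H \le T$, proving the main claim. The ``in particular'' statement follows by applying the claim symmetrically to $H$ and $T$.

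There is essentially no obstacle here: the whole content is the standard identification of $k[G]/{}^GI_{k[H]}$ with the permutation module, after which the result is immediate. The only point to be slightly careful about is to write the isomorphism $k[G]/{}^GI_{k[H]} \cong k[G/H]$ cleanly, verifying both that ${}^GI_{k[H]} \subseteq \ker\psi$ and that the induced map is injective (equivalently, that $\psi$ admits a section descending to the quotient).
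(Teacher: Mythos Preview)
Your proof is correct and follows essentially the same approach as the paper: identify $k[G]/{}^GI_{k[H]}$ with the permutation module $k[G/H]$, then observe that the stabilizer of the distinguished coset recovers $H$, so the quotient map $k[G/H]\twoheadrightarrow k[G/T]$ forces $H\le T$. The only cosmetic difference is that the paper phrases the conclusion directly in terms of $\Stab_G(1+{}^GI_{k[H]})$ rather than passing through the map $\phi$, but the content is identical.
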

\begin{proof} For any $g\in G$, we put $\overline g=gH\in G/H$. It is clear that $\Stab_G(\overline 1)=H$.
The left action of $G$ on $G/H$ induces a structure of left $k[G]$-module on $k[G/H]$. 

 Since the trivial $k[H]$-module $k$ is isomorphic to  $k[H]/I_{k[H]}$, $$k[G]/{}^GI_{k[H]}\cong k[G]\otimes _{k[H]} k  \cong k[G/H].$$ Moreover, the isomorphism is realized by the map $k[G]/{}^GI_{k[H]}\to k[G/H]$, which sends $1+{}^GI_{k[H]}$ to $\overline 1$. In particular,
$$\Stab_G(1+{}^GI_{k[H]})=\Stab_G(\overline 1)=H.$$
Thus, if ${}^GI_{k[H]}\le {}^GI_{k[T]}$, then $H=\Stab_G(1+{}^GI_{k[H]})\le \Stab_G(1+{}^GI_{k[T]})=T.$
\end{proof}
The following lemma gives an alternative description of the left $k[G]$-module ${}^GI_{k[T]}/{}^GI_{k[H]} $.
\begin{lem} \label{isomdiscr} \cite[Lemma 2.1]{Ja24}Let $H\le T$ be      subgroups of a group $G$ and $k$ a commutative ring. Then the canonical map  $$k[G]\otimes_{k[T]} (I_{k[T]}/{}^TI_{k[H]}) \to {}^GI_{k[T]}/{}^GI_{k[H]} ,$$ sending $a\otimes (b+{}^TI_{k[H]})$ to $ab+{}^GI_{k[H]}$, is an isomorphism of left $k[G]$-modules.
\end{lem}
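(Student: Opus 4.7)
The plan is to exploit that $k[G]$ is a \emph{free} right $k[T]$-module: choosing a system $\{g_i\}_{i\in I}$ of right coset representatives of $T$ in $G$, one has the decomposition $k[G]=\bigoplus_i g_i\cdot k[T]$ as right $k[T]$-modules, so in particular $k[G]$ is flat over $k[T]$.

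First, I would apply $k[G]\otimes_{k[T]}(-)$ to the short exact sequence of left $k[T]$-modules
$$0\to {}^TI_{k[H]}\to I_{k[T]}\to I_{k[T]}/{}^TI_{k[H]}\to 0.$$
By flatness, the resulting sequence
$$0\to k[G]\otimes_{k[T]}{}^TI_{k[H]}\to k[G]\otimes_{k[T]}I_{k[T]}\to k[G]\otimes_{k[T]}(I_{k[T]}/{}^TI_{k[H]})\to 0$$
is exact.

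Next, I would identify the two leftmost terms. For any left $k[T]$-submodule $M\subseteq k[T]$, the multiplication map $\mu_M\colon k[G]\otimes_{k[T]}M\to k[G]$, $a\otimes m\mapsto am$, reads on the free decomposition as $\bigoplus_i g_i\otimes M\to \bigoplus_i g_iM$; this is an isomorphism onto its image, which is $k[G]\cdot M$. Taking $M=I_{k[T]}$ yields $k[G]\otimes_{k[T]}I_{k[T]}\cong {}^GI_{k[T]}$, and taking $M={}^TI_{k[H]}$ yields $k[G]\otimes_{k[T]}{}^TI_{k[H]}\cong k[G]\cdot{}^TI_{k[H]} = {}^GI_{k[H]}$ (here I use the defining equality $k[G]\cdot k[T]\cdot I_{k[H]} = k[G]\cdot I_{k[H]}$).

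Finally, I would fit everything into the commutative diagram whose top row is the exact sequence displayed above and whose bottom row is $0\to {}^GI_{k[H]}\to {}^GI_{k[T]}\to {}^GI_{k[T]}/{}^GI_{k[H]}\to 0$, with the two leftmost vertical maps being the isomorphisms $\mu_{{}^TI_{k[H]}}$ and $\mu_{I_{k[T]}}$ just identified. The five lemma then forces the induced map on quotients, which is precisely the canonical map in the statement, to be an isomorphism of left $k[G]$-modules.

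There is really no substantive obstacle here; the only point demanding a moment's care is verifying that $\mu_M$ is well-defined and injective for $M\subseteq k[T]$, but this is immediate from the free decomposition $k[G]=\bigoplus_i g_i\,k[T]$.
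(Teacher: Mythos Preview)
Your argument is correct. The paper does not supply its own proof of this lemma---it simply cites Lemma~2.1 of an external reference---so there is nothing to compare against; your approach via the freeness of $k[G]$ as a right $k[T]$-module (identifying $k[G]\otimes_{k[T]}M$ with the left ideal $k[G]\cdot M$ for any left $k[T]$-submodule $M\subseteq k[T]$ and then passing to quotients) is the standard one and almost certainly what the cited reference does as well.
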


\subsection{Algebraic subextensions}
Let $F$ be a free group and $H$ a finitely generated subgroup of $F$. For a subgroup $L$ of $F$  containing $H$, we say that $H\leq L$ is an {\bf algebraic extension} if there is no   proper free factor of $L$ containing $H$.
We put $$\mathcal A_{H\leq F}=\{L\leq F: H\leq L \textrm{\ is an algebraic extension}\}.$$ The following result was proved by  Takahasi \cite{Ta51} (see also \cite{MVW07}).
\begin{pro}\label{Afinite}
Let $F$ be a free group and $H$ a finitely generated subgroup. Then $\mathcal A_{H\leq F}$ is finite.
\end{pro}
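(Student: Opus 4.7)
The plan is to apply Stallings' graph-theoretic techniques \cite{St83,KM02}. First I would reduce to the case where $F$ is finitely generated: let $F_0$ be the free factor of $F$ generated by the finitely many basis letters appearing in the Stallings graph of $H$. Then $F_0$ is a free factor of $F$ containing $H$, and Kurosh's subgroup theorem forces any $L\in \mathcal A_{H\le F}$ to lie inside $F_0$---otherwise $L\cap F_0$ would be a proper free factor of $L$ containing $H$, contradicting algebraicity. Hence $\mathcal A_{H\le F}=\mathcal A_{H\le F_0}$, and we may assume $F$ is finitely generated.

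Let $\Gamma_H$ denote the Stallings graph of $H$: a finite, connected, folded, labelled graph with basepoint $\star$, satisfying $\pi_1(\Gamma_H,\star)=H$. For any finitely generated $L$ with $H\le L\le F$, the inclusion induces a canonical basepoint- and label-preserving morphism of folded graphs $\iota_L\colon \Gamma_H\to\Gamma_L$. The key step is:

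\emph{If $H\le L$ is an algebraic extension, then $\iota_L$ is surjective.}

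Granting this, each algebraic extension $L$ is completely determined by the equivalence relation $\sim_L$ on $V(\Gamma_H)$ defined by $u\sim_L v$ iff $\iota_L(u)=\iota_L(v)$, since $\Gamma_L$ is then recovered from the quotient $\Gamma_H/\!\sim_L$ by iterated Stallings folding. Because $V(\Gamma_H)$ is finite, so is the set of such equivalence relations, and the proposition follows.

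The claim itself rests on the following elementary observation: if $\Delta$ is any connected subgraph of $\Gamma_L$ containing $\star$, then $\pi_1(\Delta,\star)$ embeds in $\pi_1(\Gamma_L,\star)=L$ as a free factor. Indeed, choose a spanning tree of $\Delta$ and extend it to a spanning tree of $\Gamma_L$; the resulting non-tree edges form a free basis of $L$, splitting into those inside $\Delta$ (a free basis of $\pi_1(\Delta,\star)$) and those outside (a free basis of a complementary free factor). Now if $\iota_L$ were not surjective, its image $\iota_L(\Gamma_H)$ would be a proper connected subgraph of $\Gamma_L$ containing $\star$; since $\Gamma_L$ is a core graph away from $\star$, removing any vertex or edge strictly decreases the first Betti number, so $\pi_1(\iota_L(\Gamma_H),\star)$ would be a proper free factor of $L$ containing $H$, contradicting algebraicity. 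The main obstacle is the spanning-tree free-factor lemma just used, which is classical; the remaining work is routine bookkeeping on folded labelled graphs, notably checking that $\iota_L(\Gamma_H)$ inherits a folded labelled graph structure from $\Gamma_L$ and that Stallings graphs classify finitely generated subgroups bijectively.
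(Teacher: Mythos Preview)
The paper does not supply a proof of this proposition; it simply quotes the result from Takahasi \cite{Ta51} and points to \cite{MVW07} for a modern treatment. Your sketch is exactly the Stallings-graph argument one finds in \cite{MVW07}, and it is correct.

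Two small points are worth tightening. First, you should record that every $L\in\mathcal A_{H\le F}$ is automatically finitely generated (otherwise $\Gamma_L$ is not a finite graph and your morphism $\iota_L$ is not available): if $L$ were infinitely generated, any finite subset of a free basis of $L$ large enough to express the generators of $H$ would span a proper free factor of $L$ containing $H$, contradicting algebraicity. Second, the assertion that ``removing any vertex or edge strictly decreases the first Betti number'' fails for edges on a possible tail from $\star$ to the core of $\Gamma_L$; however, since $\iota_L(\Gamma_H)$ is connected, contains $\star$, and (for $H\neq 1$) carries a nontrivial closed walk, it must contain the entire tail and reach the core, so any edge it misses is a core edge and your conclusion that $\pi_1(\iota_L(\Gamma_H),\star)$ is a \emph{proper} free factor stands.
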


\subsection{The $L^2$-torsion part of a left ${\Q[G]}$-module.}\label{sect:l2}
Let $G$ be a countable group and let $\ell^2(G)$ denote the Hilbert space with Hilbert basis the elements of $G$, that is, $\ell^2(G)$ consists of all square-summable formal sums
\[
\sum_{g \in G} a_g g
\]
with $a_g \in \mathbb{C}$, and inner product
\[
\left\langle \sum_{g \in G} a_g g, \sum_{g \in G} b_g g \right\rangle = \sum_{g \in G} a_g \overline{b_g}.
\]
The left and right multiplication actions of $G$ on itself extend to left and right actions of $G$ on $\ell^2(G)$. The right action of $G$ on $\ell^2(G)$ further extends to an action of elements of $\Q[G]$ on $\ell^2(G)$    as bounded linear operators on $\ell^2(G)$.

The von Neumann ring $\mathcal{N}(G)$ is the ring of bounded operators on $\ell^2(G)$ which commute with the left action of $G$. Thus, we can see $\Q[G]$ as a subalgebra of $\mathcal{N}(G)$. The ring $\mathcal{N}(G)$ satisfies the left and right Ore conditions (a result proved by S.~K.~Berberian in \cite{Be82}), and its classical ring of fractions is denoted by $\mathcal{U}(G)$. The ring $\mathcal{U}(G)$ can also be described as the ring of densely defined closed (unbounded) operators on $\ell^2(G)$ that commute with the left action of $G$ (see \cite{Lu02,Ka19}).

As mentioned in the introduction, the Strong Atiyah Conjecture predicts that if $G$ is torsion-free, then the division closure $\mathcal{D}(G)$ of $\mathbb{Q}[G]$ in $\mathcal{U}(G)$ is a division ring. 
In this paper we make use of Linnell's solution of the Strong Atiyah Conjecture for free groups~\cite{Lin93}. We also note that it was solved for all locally indicable groups in \cite {JL19}.

For a left $\mathbb{Q}[G]$-module $M$, we define
\[
\tor_{\mathbb{Q}[G]}(M) \;=\; \bigcap \{ \ker \phi \;\colon\; \phi : M \to \mathcal U(G) \ \text{is a $\mathbb{Q}[G]$-homomorphism} \}.
\]
 
\begin{lem}\label{equivdef}
Let $G$ be a group and $M$ a    finitely generated left $\Q[G]$-module. 

\begin{enumerate}
    \item[(1)] If $M$   is finitely generated, then
    \[
\tor_{\mathbb{Q}[G]}(M) \;=\; \bigcap \{ \ker \phi \;\colon\; \phi : M \to \ell^2(G) \ \text{is a $\mathbb{Q}[G]$-homomorphism} \}.
\]
\item [(2)] if $G$ is torsion-free and satisfies the strong Atiyah conjecture over $\mathbb{Q}$, then
\[
\tor_{\mathbb{Q}[G]}(M) \;=\; \bigcap \{ \ker \phi \;\colon\; \phi : M \to \mathcal D(G) \ \text{is a $\mathbb{Q}[G]$-homomorphism} \}.
\]
\end{enumerate}
\end{lem}
\begin{proof}
(1)  Since $\mathcal{U}(G)$ is the classical Ore localization of $\mathcal{N}(G)$, for any finite set of elements $m_1, \ldots, m_k \in \mathcal{U}(G)$ there exists an invertible element $u \in \mathcal{N}(G)$ such that 
\[
m_1 \cdot u, \ldots, m_k \cdot u \in \mathcal{N}(G).
\]
Since $u$ is invertible, the left $\mathbb{Q}[G]$-submodules of $\mathcal{U}(G)$ generated by $\{m_1, \ldots, m_k\}$ and by $\{m_1 \cdot u, \ldots, m_k \cdot u\}$ are isomorphic. Thus, any finitely generated submodule of $\mathcal{U}(G)$ is also a submodule of $\mathcal{N}(G)$.

Observe that the map
\[
\mathcal{N}(G) \longrightarrow \ell^2(G), \quad \phi \mapsto \phi(1),
\]
is an embedding (and moreover a $\mathbb{Q}[G]$-homomorphism). On the other hand, for any $u \in \ell^2(G)$, we can define a densely defined closed (unbounded) operator on $\ell^2(G)$, $\phi_u$ that sends $g$ to $g u$. This provides an embedding of $\ell^2(G)$ into $\mathcal{U}(G)$. 

We conclude that the local structures of the left $\mathbb{Q}[G]$-modules $\ell^2(G)$ and $\mathcal{U}(G)$ coincide, which implies the claim since $M$ is finitely generated.

 (2) Since $\mathcal{D}(G)$ is a $\mathbb{Q}[G]$-submodule of $\mathcal{U}(G)$, it is clear that
\[
\tor_{\mathbb{Q}[G]}(M) \;\subseteq\; \bigcap \left\{ \ker \phi \;\colon\; \phi : M \to \mathcal{D}(G) \ \text{is a $\mathbb{Q}[G]$-homomorphism} \right\}.
\]
To prove the converse inclusion, consider $m \in M$ and a $\mathbb{Q}[G]$-homomorphism 
\[
\phi : M \longrightarrow \mathcal{U}(G)
\]
such that $\phi(m) \neq 0$. Since $\mathcal{D}(G)$ is a division ring and $\mathcal U(G)$ is a $\D(G)$-space, there exists a $\mathcal{D}(G)$-homomorphism 
\[
\psi : \mathcal{U}(G) \longrightarrow \mathcal{D}(G)
\]
with $\psi(\phi(m)) \neq 0$. Therefore,
\[
m \notin \bigcap \left\{ \ker \phi \;\colon\; \phi : M \to \mathcal{D}(G) \ \text{is a $\mathbb{Q}[G]$-homomorphism} \right\}.\qedhere
\]
 \end{proof}

\subsection{Universal division ring of fractions of group rings}\label{sect:universal}
Let $k$ be a commutative domain and  $G$ a group. A {\bf division ${k[G]}$-ring of fractions} is an embedding $k[G]\hookrightarrow \D$ of ${k[G]}$  into a division ring $\D$ such that the elements of ${k[G]}$ generate $\D$ as a division ring. If $G$ satisfies the Strong Atiyah Conjecture, then the embedding 
\[
\mathbb{Q}[G] \hookrightarrow \mathcal{D}(G)
\]
is an example of a division $\mathbb{Q}[G]$-ring of fractions.

In this paper we will work with a division ${k[G]}$-ring of fractions $\mathcal{D}$ 
satisfying the following additional properties:

\begin{enumerate}
\item[(P1)] For every division ${k[G]}$-ring $\mathcal E$, and for  any  left finitely presented ${k[G]}$-module $M$,
$\dim_{\mathcal D}  \mathcal D\otimes _{k[G]} M\le \dim_{\mathcal E} \mathcal E \otimes _{k[G]} M.$
In this case we say that the embedding ${k[G]}\hookrightarrow \mathcal{D}$ 
is the {\bf universal division ${k[G]}$-ring of fractions}.  
By definition, it is uniquely determined up to ${k[G]}$-isomorphism 
(see~\cite{Ma80}); to emphasize this, we denote $\mathcal{D}$ by $\mathcal{D}_{k[G]}$.  
If $Q$ is the field of fractions of $k$, then $\mathcal{D}_{k[G]}$ is automatically universal for $Q[G]$; 
in other words, 
\(
\mathcal{D}_{Q[G]} \cong \mathcal{D}_{k[G]}
\)
as $Q[G]$-rings. We define
$$\beta_i^{k[G]}(M)=\dim_{\mathcal D_{k[G]}} \Tor_i(\mathcal D_{k[G]}, M).$$

\item[(P2)] $\D_{Q[G]}$ is of weak dimension 1 as a right ${Q[G]}$-module. This means  that for any  left ${Q[G]}$-module $M$,
$\beta_i^{Q[G]}(M)=0$ for $i>1$.
\item[(P3)] For any left ${Q[G]}$-submodule $M$ of $\D_{Q[G]}$,  $\beta_1^{Q[G]}(M)=0$.
\item[(P4)]  The division ${k[G]}$-ring $\D_{{k[G]}}$ is {\bf Lewin}, i.e.
for any subgroup $H$ of $G$, if we denote by $\D_H$  the division closure of $k[H]$ in $\D_{k[G]}$, then  the map $$\D_H\otimes_{k[H]} k[G]\to \D_{k[G]},\ d\otimes   a\mapsto da \ (d\in \D_H, a\in k[H]),$$ is injective.
\end{enumerate}
If $k$ is a field or $\Z$ and $F$ is a free group,  then the group ring $k[F]$ has an embedding in a division ring  satisfying  the properties (P1)-(P3), because it is a Sylvester domain (see \cite{DS78}). Moreover, it also satisfies (P4) (see, for example, \cite[Subsection 2.3]{Ja21}). 

We will need the following consequence of Property (P1) for $\Z[F]$. Let  $M$ be  a left  finitely presented   $\Z[F]$-module. Since the embedding $\Z[F]\hookrightarrow \D_{\Z[F]}$ is universal and $\D_{\Z[F]}=\D_{\Q[F]}$,
$$\dim_{\D_{\Z[F]}}\D_{\Z[F]}\otimes_{\Z[F]} M\le 
\dim_{\D_{\F_p[F]}}\D_{\F_p[F]}\otimes_{\Z[F]} M$$
and  we conclude that 
\begin{multline}\label{Qp}
\beta_0^{\Q[F]}(\Q\otimes_{\Z} M)=\dim_{\D_{\Q[F]}}\D_{\Q[F]}\otimes_{\Q[F]} (\Q\otimes_{\Z} M)=\\ \dim_{\D_{\Z[F]}}\D_{\Z[F]}\otimes_{\Z[F]} M\le 
\dim_{\D_{\F_p[F]}}\D_{\F_p[F]}\otimes_{\Z[F]} M=\\ \dim_{\D_{\F_p[F]}}\D_{\F_p[F]}\otimes_{\F_p[F]} (\F_p\otimes_{\Z} M)= \beta_0^{\F_p[F]}(\F_p\otimes_{\Z} M).
\end{multline}
Given an extension of fields $K_1\le K_2$, we also have that the division closure of $K_1[F]$ in $\D_{K_2[F]}$ is isomorphic (as a $K_1[F]$-ring) to  $\D_{K_1[F]}$. 
Thus, we obtain the following immediate consequence.
\begin{pro}\label{extfields} Let  $K_1\le K_2$ be an extension of fields and $M$ a
left  $K_1[F]$-module.  Then 
for any $i\geqslant  0$,
$
\beta_i^{K_2[F]}(K_2\otimes_{K_1} M)=\beta_i^{K_1[F]}(M).
$
\end{pro}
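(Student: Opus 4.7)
The plan is to compute both sides of the asserted equality via a single projective resolution, leaning on the identification of $\D_{K_1[F]}$ with the division closure of $K_1[F]$ inside $\D_{K_2[F]}$ stated in the paragraph preceding the proposition.

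First I would observe that $K_2[F]\cong K_2\otimes_{K_1}K_1[F]$ as $(K_2,K_1[F])$-bimodules, and since $K_2$ is a free $K_1$-module, $K_2[F]$ is free, hence flat, as a right $K_1[F]$-module. Consequently, if $P_\bullet\to M$ is a projective resolution of $M$ over $K_1[F]$, then $K_2[F]\otimes_{K_1[F]}P_\bullet\to K_2\otimes_{K_1}M$ is a projective resolution over $K_2[F]$, and therefore
\[
\Tor_i^{K_2[F]}(\D_{K_2[F]},K_2\otimes_{K_1}M)\;\cong\;H_i\bigl(\D_{K_2[F]}\otimes_{K_1[F]}P_\bullet\bigr).
\]

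Next, write $\mathcal D'$ for the division closure of $K_1[F]$ inside $\D_{K_2[F]}$ and identify $\mathcal D'$ with $\D_{K_1[F]}$ using the fact recalled before the statement. The action of $K_1[F]$ on $\D_{K_2[F]}$ factors through $\mathcal D'$, so
\[
\D_{K_2[F]}\otimes_{K_1[F]}P_\bullet\;\cong\;\D_{K_2[F]}\otimes_{\mathcal D'}\bigl(\mathcal D'\otimes_{K_1[F]}P_\bullet\bigr).
\]
Since $\mathcal D'$ is a division ring, every $\mathcal D'$-module is free, so $\D_{K_2[F]}\otimes_{\mathcal D'}(-)$ is exact and commutes with the formation of homology. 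Hence
\[
H_i\bigl(\D_{K_2[F]}\otimes_{K_1[F]}P_\bullet\bigr)\;\cong\;\D_{K_2[F]}\otimes_{\mathcal D'}\Tor_i^{K_1[F]}(\D_{K_1[F]},M).
\]

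Finally, for any left $\mathcal D'$-module $V$, a $\mathcal D'$-basis of $V$ yields a $\D_{K_2[F]}$-basis of $\D_{K_2[F]}\otimes_{\mathcal D'}V$ of the same cardinality, so $\dim_{\D_{K_2[F]}}(\D_{K_2[F]}\otimes_{\mathcal D'}V)=\dim_{\mathcal D'}V$. Applying this with $V=\Tor_i^{K_1[F]}(\D_{K_1[F]},M)$ gives the desired equality $\beta_i^{K_2[F]}(K_2\otimes_{K_1}M)=\beta_i^{K_1[F]}(M)$. The only nontrivial ingredient is the identification $\mathcal D'\cong\D_{K_1[F]}$, which is taken as given; once it is in hand, everything else is routine flat-base-change homological algebra, so no real obstacle arises in the proof of this particular proposition.
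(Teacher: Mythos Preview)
Your proof is correct and is precisely the standard unpacking of what the paper means by ``immediate consequence'': the paper provides no proof of this proposition, merely recording it as following directly from the identification of $\D_{K_1[F]}$ with the division closure of $K_1[F]$ inside $\D_{K_2[F]}$, and your argument is the natural flat-base-change computation that justifies this.
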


  Let $K$ be a field. The property (P4) has its origin in a work of Hughes  \cite{Hu70}. The result of Hughes implies that   if $G$ is locally indicable,  then there exists at most one division $K[G]$-ring of fractions satisfying (P4). In the case of free  group $F$ this has two consequences. First,  
  for any subgroup $H$ of $F$, $\D_H$ is isomorphic to $\D_{K[H]}$ as a $K[H]$-ring  and second that 
  the division $\Q[F]$-ring $\D(F)$ mentioned in the introduction is isomorphic to $\D_{\Q[F]}$ as a $\Q[F]$-ring. Thus, for any left $\Q[F]$-module $M$ we can define its {\bf $i$th $L^2$-Betti numbers} in a pure algebraic way as
$\beta_i^{\Q[F]}(M)$. By analogy, if $M$ is a left $\F_p[F]$-module $\beta_i^{\F_p[F]}(M)$ are called
the {\bf $i$th mod-$p$ $L^2$-Betti numbers} of $M$.

\subsection{The Euler characteristic}\label{sectFL}

A $R$-module $M$ is of type $\FL$  (resp. $\FP$) if it has a finite resolution consisting of finitely generated free (resp. projective) $R$-modules.

For a $R$-module $M$ of type $\FL$ that has the following  resolution 
\begin{equation*}
     \begin{tikzcd}
         0  \ar[r]&  R^{n_k}  \ar[r] &  \cdots \ar[r] &  R^{n_0} \ar[r] & M  \ar[r] & 0,
\end{tikzcd}
\end{equation*} 
 we define the Euler characteristic of $M$ as  $$\chi^R(M)=\sum_{i=0}^k (-1)^i n_i.$$ A standard aplication of Schanuel’s lemma \cite[Chapter VIII, Lemma 4.4]{Bro82} implies, that this definition does not depend on the free resolution, and, noreover, if $0\to M_1\to M_2\to M_3\to 0$ is an exact sequence of   $FL$ $R$-modules, then 

\begin{equation}
    \label{FL}
    \chi^R(M_2)=\chi^R(M_1)+\chi^R(M_3).
\end{equation}
If $R\to \D$ is a division $R$-ring, the $\D$-Betti numbers of a left $R$-module $M$ are defined as 
\[b^{R,\D}_i(M)=\dim_{\D}  \Tor_i^{R}(\D,M).\]
 If $M$ is of type $\FL$, then using  the long exact sequence of $\Tor$-functors one obtains that the Euler characteristic of $M$ can be calculated as
\begin{equation}
\label{calculatingEuler}
\chi^{R}(M)=\sum_{i} (-1)^i b^{R,\D}_i(M).
\end{equation}

\begin{pro}\label{FP} \cite[Proposition 1.4 and Proposition 4.1b]{Bie81}
 Let $0\to M_1\to M_2\to M_3\to 0$ be an exact sequence of  $R$-modules. If two of the modules $M_1$, $M_2$ or $M_3$ are of type $\FP$, then  the third one is also of type $\FP$. 
\end{pro}
The same statement can also be obtained for $\FL$-modules using \cite[Theorem~3.9.1]{Bou80}. However, we will not use this fact, since all modules of type $\FP$ considered in this paper are also of type $\FL$.

In the case of rings $k[G]$ satisfying the properties (P1)-(P3) we have the following result.

\begin{pro}\label{positiveEuler} 
Assume $R=k[G]$ be a group ring having an embedding in a division ring satisfying the properties (P1)-(P3) and let $M$ be a left $\FL$ $R$-module. Then
$$\chi^R(M)=\beta_0^R(M)-\beta_1^R(M).$$
In particular, for any non-trivial   left $\FL$ $R$-submodule $M$ of $\D_R$,  $\chi^R(M)>0$.
\end{pro}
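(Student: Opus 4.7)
The plan is to read off both assertions from formula \eqref{calculatingEuler} together with the properties (P2) and (P3) of the universal division ring $\D_R$. Since $M$ is $\FL$, formula \eqref{calculatingEuler} specialized to $\D=\D_R$ rewrites as
$$\chi^R(M)=\sum_{i\ge 0}(-1)^i \beta_i^R(M).$$
Property (P2) states that $\D_R$ has weak dimension at most $1$ as a right $R$-module, so $\beta_i^R(M)=\dim_{\D_R}\Tor_i^R(\D_R,M)=0$ for every $i\ge 2$. The alternating sum therefore collapses to $\beta_0^R(M)-\beta_1^R(M)$, which yields the first assertion.

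For the ``in particular'' statement, suppose $M$ is a nontrivial $\FL$ $R$-submodule of $\D_R$. Property (P3) forces $\beta_1^R(M)=0$, and the formula just proved gives $\chi^R(M)=\beta_0^R(M)$. What remains is to exhibit $\D_R\otimes_R M\ne 0$; since $\beta_0^R(M)$ is the $\D_R$-dimension of a finite-dimensional space, nonvanishing upgrades to $\chi^R(M)\ge 1$.

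To obtain nonvanishing of $\D_R\otimes_R M$, I would consider the multiplication map
$$\mu:\D_R\otimes_R M\longrightarrow \D_R,\qquad d\otimes m\mapsto dm,$$
which is well defined because $M$ embeds in $\D_R$ as a left $R$-submodule. Picking any nonzero $m\in M$, invertibility of $m$ in the division ring $\D_R$ gives $a=\mu(am^{-1}\otimes m)$ for every $a\in \D_R$, so $\mu$ is surjective. Hence $\D_R\otimes_R M\ne 0$ and $\chi^R(M)\ge 1>0$.

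There is no substantial obstacle here: the statement is essentially a packaging of the axioms (P2) and (P3) together with the elementary observation that multiplication by an invertible element in a division ring is surjective. The only point requiring a sentence of justification is that the map $\mu$ lands in $\D_R$ rather than in the potentially larger $\D_R\otimes_R\D_R$, and this is precisely what lets us detect nonvanishing directly from the existence of a nonzero element in $M$.
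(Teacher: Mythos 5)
Your proof is correct and follows essentially the same route as the paper: apply $\D_R\otimes_R-$ to a finite free resolution to get $\chi^R(M)=\sum_i(-1)^i\beta_i^R(M)$, then invoke (P2) and (P3). The only difference is that you explicitly verify $\beta_0^R(M)>0$ for a nonzero submodule $M\le\D_R$ via the surjective multiplication map $\D_R\otimes_R M\to\D_R$, a step the paper leaves implicit; your justification of it is sound.
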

\begin{proof}From \cref{calculatingEuler} we obtain that
\[
\chi^R(M) = \sum_{i=0}^k (-1)^i \, \beta_i^R(M).
\]
It then follows from Property (P2) that
\[
\chi^R(M) = \beta_0^R(M)-\beta_1^R(M),
\]
and from Property (P3) that if  $M$ is a left   $R$-submodule  of $\D_R$, then 
\[
\chi^R(M) = \beta_0^R(M).
\]
If  $M$ is a non-trivial submodule of $\mathcal{D}_R$, we have $\beta_0^R(M)=\dim_{\D_R} \D_R\otimes _{R} M \geq 1$.
\end{proof}

\subsection{Modules over group algebra over a free group}
 
\begin{pro}\label{FLZF}   Let $K$ be a field and $F$ a free group.   Then the ring $K[F]$ has global dimension~$1$, and   every finitely presented $K[F]$-module  is of type $\FL$.

 Moreover, if $M$ is   a finitely generated $K[F]$-module and $\beta_1^{K[F]}(M)<\infty$, then there exists an exact sequence
$$0\to {K[F]}^r\to {K[F]}^d\to M\to 0.$$
\end{pro}
\begin{proof}
The first part follows from \cite[Corollary~5]{Co64}, where it is shown that every submodule of a free $K[F]$-module is free. 

For the second part of the proposition, we have an exact sequence 
\[
0 \longrightarrow L \longrightarrow K[F]^d \longrightarrow M \longrightarrow 0,
\]
where $L$ is a free $K[F]$-module. The condition $\beta_1^{K[F]}(M)<\infty$ implies that $\beta_0^{K[F]}(L)<\infty$, and so, $L$ is finitely generated.

\end{proof}
As a corollary of the previous proposition, we obtain the following result.            
\begin{lem}\label{0tor1} Let $K$ be a field, $F$ a free group, $M_1\le M_2$ two left $K[F]$-modules and $N$ a right  $K[F]$-module. Then  the natural map 
$$\Tor_1^{K[F]}(N,M_1)\to \Tor_1^{K[F]}(N,M_2)$$ is injective.
\end{lem}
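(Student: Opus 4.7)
The statement is an immediate consequence of the observation preceding it: since $K[F]$ has global dimension $\le 1$, we have $\Tor_n^{K[F]}(N,-)=0$ for every $n\ge 2$ and every right $K[F]$-module $N$. The plan is just to plug this vanishing into the long exact sequence of $\Tor$.

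Concretely, I would consider the short exact sequence of left $K[F]$-modules
\[
0\to M_1\to M_2\to M_2/M_1\to 0
\]
and apply the functor $N\otimes_{K[F]}-$ to obtain the associated long exact sequence
\[
\cdots\to \Tor_2^{K[F]}(N,M_2/M_1)\to \Tor_1^{K[F]}(N,M_1)\to \Tor_1^{K[F]}(N,M_2)\to\cdots
\]
Since the global dimension of $K[F]$ is at most $1$, the group $\Tor_2^{K[F]}(N,M_2/M_1)$ vanishes, so exactness forces $\Tor_1^{K[F]}(N,M_1)\to \Tor_1^{K[F]}(N,M_2)$ to be injective.

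There is essentially no obstacle here; the only thing to justify (if needed) is the global dimension bound. That in turn follows from a free resolution of the trivial module: choosing a free generating set $S$ of $F$, the sequence
\[
0\to \bigoplus_{s\in S} K[F]\to K[F]\to K\to 0,
\]
where the left map sends the basis element indexed by $s$ to $s-1$, is exact, so $K$ has projective dimension at most $1$ over $K[F]$. Together with the standard reduction (tensoring a projective resolution of $K$ over $K$ with an arbitrary module, using the diagonal $F$-action) this yields $\mathrm{gl.\,dim}(K[F])\le 1$, as invoked above.
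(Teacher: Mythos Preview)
Your argument is correct and is exactly what the paper has in mind: the lemma is stated there without an explicit proof, merely as an immediate consequence of the fact that $K[F]$ has global dimension $1$, which is precisely the input you use in the long exact sequence for $\Tor$. Your additional paragraph justifying the global dimension bound is fine (and more detailed than the paper itself).
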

\begin{proof}
Using the long exact sequence for $\Tor$ functors, we obtain that the kernel of this map is $\Tor_2^{K[F]}(N, M_2 / M_1)$, which is zero since $K[F]$ has global dimension 1.
\end{proof}

\subsection{$\D_{K[F]}$-independent modules}\label{l2ind}
Let $F$ be a free group and $K$ a field.
By analogy with $L^2$-independence, we also say that a subgroup $H$ is {\bf $\D_{K[F]}$-independent} in  $F$ if the canonical map $$\D_{K[F]}\otimes_{K[H]}I_{K[H]}\to \D_{K[F]}\otimes_{K[F]}I_{K[F]}$$ is injective. This is equivalent to the condition $\beta_1^{K[F]}(I_{K[F]}/{}^FI_{K[H]})=0$. Observe that, since $\mathcal D(F)$ is isomorphic as a $\Q[F]$-ring  to $\D_{\Q[F]}$,
$\D_{\Q[F]}$-independence is the same as $L^2$-independence defined before.

If $H$ is a finitely generated subgroup of a free group $F$, we say that $H$ is {\bf strongly inert} in $F$ if for any finitely generated subgroup $U$ of $F$ we have that
   $$\sum_{x\in U \backslash  F/H} \brk (U\cap xHx^{-1})\le   \brk(U) .$$
This notion has been itroduced by Ivanov \cite{Iv18}. It is clear that a strongly inert subgroup of  a free group is also inert.
 The proof of the following proposition is a small modification of the proof of   \cite[Proposition 5.2]{AJ22}. 
 \begin{pro}
Let $H$ be a finitely generated subgroup of a finitely generated free group $F$. Assume that   $H$ is { $\D_{K[F]}$-independent} in $F$. Then $H$ is strongly  inert in $F$.\end{pro}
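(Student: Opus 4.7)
The plan is to restrict the relevant modules from $K[F]$ down to $K[U]$, exploit the double-coset decomposition of $F/H$, and extract the strong inertness inequality from the long exact sequence of $\Tor^{K[U]}(\D_{K[U]},-)$ attached to
$$0 \to I_{K[F]}/{}^FI_{K[H]} \to K[F/H] \to K \to 0.$$
The strategy reduces to computing the first $\D_{K[U]}$-Betti number of each of the three terms.

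For the middle term, set $U_x := U\cap xHx^{-1}$. The double-coset decomposition $F = \bigsqcup_x UxH$ yields an isomorphism of left $K[U]$-modules
$$K[F/H] \cong \bigoplus_{x\in U\backslash F/H} K[U/U_x].$$
Each $U_x$ is finitely generated free by Howson, so Shapiro's lemma together with the standard length-one free resolution of $K$ over $K[U_x]$ identifies $\Tor_1^{K[U]}(\D_{K[U]}, K[U/U_x])$ with the kernel of the map $\D_{K[U]}^{\rk(U_x)} \to \D_{K[U]}$ sending the $i$-th basis vector to $u_i - 1$. Since $\D_{K[U]}$ is a division ring, this kernel has $\D_{K[U]}$-dimension $\brk(U_x)$. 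Summing, $\beta_1^{K[U]}(K[F/H]) = \sum_x \brk(U_x)$, while the same computation applied to the trivial module gives $\beta_1^{K[U]}(K) = \brk(U)$.

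The remaining task is to show $\beta_1^{K[U]}(I_{K[F]}/{}^FI_{K[H]}) = 0$, which is where the hypothesis enters. Because $K[F]$ is free as a right $K[U]$-module, flat change of rings identifies $\Tor_i^{K[U]}(\D_{K[U]}, M)$ with $\Tor_i^{K[F]}(\D_{K[U]}\otimes_{K[U]} K[F], M)$ for any left $K[F]$-module $M$. Property (P4) produces an embedding $\D_{K[U]}\otimes_{K[U]} K[F] \hookrightarrow \D_{K[F]}$ of right $K[F]$-modules, and the fact that $K[F]$ has global dimension one makes $\Tor_2^{K[F]}$ of the cokernel vanish, so the long exact sequence exhibits $\Tor_1^{K[F]}(\D_{K[U]}\otimes_{K[U]} K[F], I_{K[F]}/{}^FI_{K[H]})$ as a submodule of $\Tor_1^{K[F]}(\D_{K[F]}, I_{K[F]}/{}^FI_{K[H]})$, which is zero by the $\D_{K[F]}$-independence hypothesis.

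Plugging the three Betti-number computations into the long exact sequence of the opening short exact sequence yields $\sum_{x\in U\backslash F/H} \brk(U_x) \le \brk(U)$, which is exactly the strong inertness bound. The delicate step is the descent in the previous paragraph: the Lewin property (P4) is precisely what transfers vanishing of $\Tor_1$ from $K[F]$ to $K[U]$ coefficients, and without it a naive restriction argument would not go through.
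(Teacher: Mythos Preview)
Your proof is correct and follows essentially the same route as the paper's: the paper also uses the short exact sequence $0\to I_{K[F]}/{}^FI_{K[H]}\to K[F/H]\to K\to 0$, the double-coset decomposition of $K[F/H]$ as a $K[U]$-module, Shapiro's lemma to pass between $K[U]$- and $K[F]$-Tor, and Property~(P4) together with global dimension~$1$ (packaged there as Lemma~\ref{0tor1}) to force $\Tor_1^{K[U]}(\D_{K[U]},I_{K[F]}/{}^FI_{K[H]})=0$. The only cosmetic differences are that you spell out the $\Tor_2$-vanishing argument and the resolution computing $\brk(U_x)$ explicitly, and you invoke Howson's theorem (which the paper leaves implicit).
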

\begin{proof}
Let $U$ be a finitely generated subgroup of $F$.  Since $H$ is { $\D_{K[F]}$-independent} in $F$ 
$$
\Tor_1^{K[F]}( \D_{K[F]}, I_{K[F]}/{}^FI_{K[H]})=0.
$$
By Property (P4), the right  $K[F]$-module $   \D_{K[U]}  \otimes_{K[U]}K[F]$ naturally embeds into $\D_{K[F]}$. Hence, by \cref{0tor1}, we also have 
$$\Tor_1^{K[F]}(\D_{K[U]} \otimes_{K[U]} K[F]  , I_{K[F]}/{}^F I_{K[H]}  )=0.$$
Thus, by Shapiro's Lemma 
\begin{multline}\label{zerotor1}
\Tor_1^{K[U]}(     \D_{K[U]},  I_{K[F]}/{}^FI_{K[H]}  )=\\ \Tor_1^{K[F]}(  \D_{K[U]} \otimes_{K[U]} K[F], I_{K[F]}/{}^FI_{K[H]}   )
=0.
\end{multline}
As we have shown in the proof of \cref{permutationmodule}, the left $K[F]$-modules  $K[F]/{}^FI_{K[H]}$ and $K[F/H]$ are isomorphic. 
Thus, we have the following isomorphism of left $K[U]$-modules.
$$ K[F]/{}^FI_{K[H]}\cong K[F/H]\cong  \bigoplus_{x\in U \backslash  F/H} K[U/(U\cap xHx^{-1})].$$ Since, $\brk(U\cap xHx^{-1})=\dim_{\D_{K[U]}}
 \Tor_1^{K[U]}( \D_{K[U]}, K[U/(U\cap xHx^{-1})])$, we obtain that
 $$\sum_{x\in U \backslash  F/H} \brk (U\cap xHx^{-1})=\dim_{\D_{K[U]}}\Tor_1^{K[U]}( \D_{K[U]}, K[F]/{}^FI_{K[H]}  ).$$
 From the exact sequence of the left $K[U]$-modules
 $$0\to  I_{K[F]}/{}^FI_{K[H]}  \to K[F]/{}^FI_{K[H]}\to K\to 0,$$ we obtain that
 \begin{multline*}\dim_{\D_{K[U]}}\Tor_1^{K[U]}( \D_{K[U]}, K[F]/{}^FI_{K[H]}  )\le \\
 \dim_{\D_{K[U]}}\Tor_1^{K[U]}( \D_{K[U]},  I_{K[F]}/{}^FI_{K[H]}  )+ \dim_{\D_{K[U]}}\Tor_1^{K[U]}( \D_{K[U]},  K  )
 \myeq{\cref{zerotor1}}\\ \dim_{\D_{K[U]}}\Tor_1^{K[U]}( \D_{K[U]},  K  )= \brk(U). 
 \end{multline*}
  This gives the desired inequality:
  $$\sum_{x\in U \backslash  F/H} \brk (U\cap xHx^{-1})\le \brk(U).  \qedhere$$.
\end{proof}

\section{Proof of the results}

In this section we assume that $K$ is a field  and $F$ is a free group.

\subsection{The $\D_{K[F]}$-torsion part of a left ${K[F]}$-module.}
Let $k$ be either a field $K$ or $k = \mathbb{Z}$, and let $M$ be a left $k[F]$-module. Denote by 
\(
\tor_{k[F]}(M)
\)
the kernel of the canonical map 
\[
M \longrightarrow \mathcal{D}_{k[F]} \otimes_{k[F]} M.
\]

 The next result shows that $\tor_{K[F]}(M)$ is of type $\FL$  if $M$ is of type $\FL$.

\begin{pro}\label{torfp}
Let   $M$ be  a left finitely presented $K[F]$-module. Then  $\tor_{K[F]}(M)$ is of type $\FL$.  \end{pro}
\begin{proof}
By Property (P3), $\beta^{K[F]}_1(M/\tor_{K[F]}(M))=0$. Hence, by \cref{FLZF}, $M$ and $M/\tor_{K[F]}(M)$ are of type $\FL$. Hence, by \cref{FP}, $\tor_{K[F]}(M)$ is of type $\FP$ and so it is of type $\FL$, by \cref{FLZF}.
\end{proof}

The following result gives a characterization of $\tor_{K[F]}(M)$.
\begin{pro}\label{proptor}
Let   $M$ be a left finitely presented   ${K[F]}$-module. Then we have that $$\beta_0^{{K[F]}}(\tor_{{K[F]}}(M))=0.$$
 Moreover, $\tor_{{K[F]}}(M)$ is the maximal submodule of $M$ with this property and we also have that $$ \beta_1^{{K[F]}}(\tor_{{K[F]}}(M))=\beta_1^{{K[F]}}(M),$$ and so, $ \chi^{{K[F]}}(\tor_{{K[F]}}(M))=-\beta_1^{{K[F]}}(M)$. 
\end{pro}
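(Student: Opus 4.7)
The plan is to analyze the defining short exact sequence
$$0 \to \tor_{k[F]}(M) \to M \to M/\tor_{k[F]}(M) \to 0$$
via the Tor long exact sequence over $k[F]$, using that $M/\tor_{k[F]}(M)$ embeds canonically into $\D_{k[F]}\otimes_{k[F]} M$, which is a finitely generated free $\D_{k[F]}$-module since $M$ is finitely generated. Concretely, setting $n=\beta_0^{k[F]}(M)=\dim_{\D_{k[F]}}(\D_{k[F]}\otimes_{k[F]} M)$, one has $M/\tor_{k[F]}(M)\hookrightarrow \D_{k[F]}^n$ as a $k[F]$-submodule.

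The main technical step is to extend Property (P3) from submodules of $\D_{k[F]}$ to submodules of $\D_{k[F]}^n$: for every $k[F]$-submodule $N\subseteq \D_{k[F]}^n$ one has $\beta_1^{k[F]}(N)=0$. I would argue by induction on $n$, with base case $n=1$ given directly by (P3). For the inductive step, write $N'=N\cap \D_{k[F]}^{n-1}$ (the intersection with the first $n-1$ coordinates) and consider
$$0\to N'\to N\to \pi_n(N)\to 0,$$
where $\pi_n$ is projection onto the last coordinate. In the Tor long exact sequence for $\D_{k[F]}$, Property (P2) kills $\Tor_2(\D_{k[F]},\pi_n(N))$ on the left, Property (P3) gives $\beta_1^{k[F]}(\pi_n(N))=0$, and the inductive hypothesis gives $\beta_1^{k[F]}(N')=0$; hence $\beta_1^{k[F]}(N)=0$. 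Applied to $N=M/\tor_{k[F]}(M)$, this yields $\beta_1^{k[F]}(M/\tor_{k[F]}(M))=0$.

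With this vanishing, the Tor long exact sequence for the defining sequence simplifies: by (P2) the $\Tor_2$ term vanishes, so we obtain both
$$\beta_1^{k[F]}(\tor_{k[F]}(M))=\beta_1^{k[F]}(M)$$
and the injectivity of $\D_{k[F]}\otimes_{k[F]} \tor_{k[F]}(M)\to \D_{k[F]}\otimes_{k[F]} M$. By the very definition of $\tor_{k[F]}(M)$, elements of $\tor_{k[F]}(M)$ vanish in $\D_{k[F]}\otimes_{k[F]} M$, so this injective map has zero image; therefore $\beta_0^{k[F]}(\tor_{k[F]}(M))=0$. For maximality, if $N\subseteq M$ has $\beta_0^{k[F]}(N)=0$, the canonical factorization $N\to \D_{k[F]}\otimes_{k[F]} N\to \D_{k[F]}\otimes_{k[F]} M$ of the composition $N\hookrightarrow M\to \D_{k[F]}\otimes_{k[F]} M$ forces every element of $N$ to die in $\D_{k[F]}\otimes_{k[F]} M$, placing $N$ inside $\tor_{k[F]}(M)$.

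Finally, Corollary \ref{torfp} tells us that $\tor_{k[F]}(M)$ is $\FL$, so Proposition \ref{positiveEuler} applies and yields
$$\chi^{k[F]}(\tor_{k[F]}(M))=\beta_0^{k[F]}(\tor_{k[F]}(M))-\beta_1^{k[F]}(\tor_{k[F]}(M))=-\beta_1^{k[F]}(M).$$
The main obstacle is the inductive bootstrapping of Property (P3) to submodules of $\D_{k[F]}^n$; once that is in hand, everything else is routine bookkeeping with the Tor long exact sequence together with the universal characterization of $\tor_{k[F]}(M)$.
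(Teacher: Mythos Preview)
Your proof is correct and runs parallel to, but not identical with, the paper's. Both arguments rest on $\beta_1^{k[F]}(M/\tor_{k[F]}(M))=0$; you make the extension of Property~(P3) from submodules of $\D_{k[F]}$ to submodules of $\D_{k[F]}^n$ explicit by induction (incidentally, the appeal to (P2) in that induction is harmless but unnecessary), whereas the paper simply invokes (P3) and could alternatively have cited Lemma~\ref{0tor1} for the inclusion into $\D_{k[F]}^n$. The real divergence is in the order of deductions. The paper first computes $\chi^{k[F]}(\tor_{k[F]}(M))=-\beta_1^{k[F]}(M)$ via additivity of Euler characteristics (Proposition~\ref{FL}) and then runs a squeeze using Lemma~\ref{0tor1} to pin down the individual Betti numbers. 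You instead read both $\beta_1^{k[F]}(\tor_{k[F]}(M))=\beta_1^{k[F]}(M)$ and the injectivity of $\D_{k[F]}\otimes_{k[F]}\tor_{k[F]}(M)\to\D_{k[F]}\otimes_{k[F]} M$ directly from the Tor long exact sequence, and your observation that this injective map is simultaneously zero (because $\tor_{k[F]}(M)$ generates the left-hand side over $\D_{k[F]}$ yet dies in $\D_{k[F]}\otimes_{k[F]} M$ by definition) yields $\beta_0^{k[F]}(\tor_{k[F]}(M))=0$ without any Euler-characteristic bookkeeping until the last line. Your route thus bypasses Proposition~\ref{FL} and Lemma~\ref{0tor1} entirely, while the paper's route foregrounds the additivity; both are clean and of comparable length.
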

\begin{proof}
Let $M_1 = \operatorname{tor}_{K[F]}(M)$. By \cref{torfp}, it is a left $K[F]$-module of type $\mathsf{FL}$. 
Observe that 
\[
\beta_0^{K[F]}(M) = \beta_0^{K[F]}(M/M_1),
\]
and by property  (P3), we also have $\beta_1^{K[F]}(M/M_1) = 0$. 
The short exact sequence
\[
0 \longrightarrow M_1 \longrightarrow M \longrightarrow M/M_1 \longrightarrow 0
\]
induces the long exact sequence in $\Tor$:
\begin{multline*}
0 \rightarrow \Tor_{1}^{\D_{K[F]}}(\D_{K[F]}, M_1) \rightarrow 
\Tor_{1}^{\D_{K[F]}}(\D_{K[F]}, M) \rightarrow 
\Tor_{1}^{\D_{K[F]}}(\D_{K[F]}, M/M_1) \rightarrow \\
\Tor_{0}^{\D_{K[F]}}(\D_{K[F]}, M_1) \rightarrow
\Tor_{0}^{\D_{K[F]}}(\D_{K[F]}, M) \rightarrow
\Tor_{0}^{\D_{K[F]}}(\D_{K[F]}, M/M_1) \rightarrow 0.
\end{multline*}
Hence, we obtain
\[
\beta_0^{K[F]}(M_1) = 0 \quad \text{and} \quad 
\beta_1^{K[F]}(M_1) = \beta_1^{K[F]}(M).
\]
Finally, it is clear that if $\beta_0^{K[F]}(N) = 0$, then $N \leq M_1$.

\end{proof}

 \subsection{$\D_{K[F]}$-torsion-free modules}
 We say that $M$ is {\bf $\D_{K[F]}$-torsion-free} if $$\tor_{K[F]}(M)=0.$$ 
 The following result is similar to \cite[Lemma 4.7]{Ja24}.
 \begin{pro}
 \label{extabs}  Let $K$ be a field,  $H$   a subgroup of a free group $F$ and
 $M$  a $\D_{K[H]}$-torsion-free left $K[H]$-module. Then $K[F]\otimes_{K[H]} M$ is $\D_{K[F]}$-torsion-free.
 \end{pro}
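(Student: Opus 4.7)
The plan is to embed $K[F]\otimes_{K[H]} M$ into a manifestly $\D_{K[F]}$-torsion-free left $K[F]$-module; this suffices because if $A\subseteq B$ and $B$ is $\D_{K[F]}$-torsion-free, then the composition $A\to B\to \D_{K[F]}\otimes_{K[F]} B$ is injective, which forces the canonical map $A\to \D_{K[F]}\otimes_{K[F]} A$ to be injective as well.

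First I would use the hypothesis that $M$ is $\D_{K[H]}$-torsion-free to obtain the canonical injection $M\hookrightarrow N:=\D_{K[H]}\otimes_{K[H]} M$, where $N$ is naturally a left $\D_{K[H]}$-module. Next, a choice of left coset representatives for $H$ in $F$ exhibits $K[F]$ as a free right $K[H]$-module, so the functor $K[F]\otimes_{K[H]}(-)$ is exact. Applying it to the above injection yields an embedding $K[F]\otimes_{K[H]} M\hookrightarrow K[F]\otimes_{K[H]} N$. Since $N$ is a module over the division ring $\D_{K[H]}$ it has a basis, giving $N\cong \D_{K[H]}^{(I)}$ for some index set $I$, and therefore
$$K[F]\otimes_{K[H]} N\cong \bigl(K[F]\otimes_{K[H]}\D_{K[H]}\bigr)^{(I)}.$$

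The key step is then to embed $K[F]\otimes_{K[H]}\D_{K[H]}$ into $\D_{K[F]}$ via the multiplication map $a\otimes d\mapsto ad$. This is the left-right symmetric counterpart of the Lewin property (P4), applied under the identification $\D_H\cong \D_{K[H]}$ of $K[H]$-rings noted just after (P4). Granting this embedding, taking direct sums produces $K[F]\otimes_{K[H]} N\hookrightarrow \D_{K[F]}^{(I)}$, and $\D_{K[F]}^{(I)}$ is visibly $\D_{K[F]}$-torsion-free as a free left $\D_{K[F]}$-module. Composing with the earlier embedding finishes the proof.

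The main technical point I would check carefully is that the asymmetrically-stated form of (P4) really does supply the multiplication map on the opposite side. This should follow either by direct appeal to the references cited for (P4) or by applying the anti-involution $g\mapsto g^{-1}$ on $K[F]$, which interchanges left and right $K[F]$-modules and transports one form of the Lewin statement to the other. Everything else in the argument is routine manipulation of flat tensor products and of bases over a division ring.
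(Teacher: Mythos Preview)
Your argument is correct and follows essentially the same route as the paper's proof: both embed $K[F]\otimes_{K[H]}M$ into a left $\D_{K[F]}$-module by first using the injection $M\hookrightarrow \D_{K[H]}\otimes_{K[H]}M$, then applying the exact functor $K[F]\otimes_{K[H]}(-)$, and finally invoking the Lewin property (P4) (in its left--right symmetric form, via the identification $\D_H\cong\D_{K[H]}$). The only cosmetic difference is that the paper, instead of choosing a $\D_{K[H]}$-basis of $N$, reassociates the tensor product and uses that the image of $K[F]\otimes_{K[H]}\D_H$ in $\D_{K[F]}$ is a right $\D_H$-direct summand to land in $\D_{K[F]}\otimes_{K[H]}M$; your basis argument achieves the same effect. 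Your flagged concern about the sidedness of (P4) is well taken and is exactly the step the paper itself uses without further comment.
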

\begin{proof}   Let $\D_H$ be the division closure of $K[H]$ in $\D_{K[F]}$. As we mentioned in \cref{sect:universal}, $\D_H$ and $\D_{K[H]}$ are isomorphic as  $K[H]$-rings.
Therefore,  the map $ M\to \D_H\otimes_{K[H]}M$ is injective. Then, since $K[F]$ is a free right $K[H]$-module, the map
$$K[F]\otimes_{K[H]}M\xrightarrow {\alpha}K[F]\otimes_{K[H]}( \D_H\otimes_{K[H]} M)$$ is also injective. 

Consider  the canonical isomorphism between tensor products 
$$K[F]\otimes_{K[H]}( \D_H\otimes_{K[H]} M)\xrightarrow {\beta} (K[F]\otimes_{K[H]} \D_H)\otimes_{K[H]} M.$$ 
By Property (P4),
 the canonical map    $$K[F]\otimes_{K[H]} \D_H\to \D_{K[F]}$$ is injective. Moreover, since $\D_H$ is a division ring, the image of $K[F]\otimes_{K[H]} \D_H$  is a direct    summand of $\D_{K[F]}$ as a right $\D_H$-submodule (and so, it is also a direct  summand as a right $K[H]$-submodule). Thus,  the following canonical map  of $K[F]$-modules
 $$ (K[F]\otimes_{K[H]} \D_H)\otimes_{K[H]}M \xrightarrow {\gamma}  \D_{K[F]}\otimes_{K[H]} M$$ 
 is injective. 
Hence $\varphi=\gamma\circ \beta\circ \alpha$ is an injective homomorphism of left $K[F]$-modules. Since a submodule of a $\D_{K[F]}$-torsion-free module is also  $\D_{K[F]}$-torsion-free,
 $K[F]\otimes_{K[H]} M$ is $\D_{K[F]}$-torsion-free.
\end{proof}

 Observe that if $K_1$ is a subfield of a field $K_2$, then the division closure of $K_1[F]$ in $\D_{K_2[F]}$ is isomorphic to $\D_{K_1[F]}$ as a $K_1[F]$-ring. This implies the following consequence.
\begin{lem} \label{extcuerpostf} 
Let $K_1$ be  a subfield of a field $K_2$ and let $F$ be a free group. If a $K_1[F]$-module $M$ is $\D_{K_1[F]}$-torsion-free, then the $K_2[F]$-module $K_2\otimes_{K_1} M$ is $\D_{K_2[F]}$-torsion-free.
\end{lem}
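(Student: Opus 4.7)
The plan is to mirror the proof of Proposition~\ref{extabs}, with the coefficient extension $K_1\subseteq K_2$ playing the role of the subgroup inclusion $H\le F$. Write $\D_1=\D_{K_1[F]}$ and $\D_2=\D_{K_2[F]}$. The remark immediately preceding the statement identifies $\D_1$ with the division closure of $K_1[F]$ inside $\D_2$, so I may treat $\D_1\subseteq \D_2$.

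First, the hypothesis that $M$ is $\D_{K_1[F]}$-torsion-free means $M\hookrightarrow \D_1\otimes_{K_1[F]}M$. Since $K_2[F]=K_2\otimes_{K_1}K_1[F]$ is a free right $K_1[F]$-module (any $K_1$-basis of $K_2$ serves as a basis), the functor $K_2\otimes_{K_1}-$ is exact, producing the injection
\[
K_2\otimes_{K_1}M\;\hookrightarrow\;(K_2\otimes_{K_1}\D_1)\otimes_{K_1[F]}M
\]
after the obvious associativity rearrangement.

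The decisive step is then to verify that the multiplication map
\[
\mu:K_2\otimes_{K_1}\D_1\longrightarrow\D_2,\qquad a\otimes d\mapsto ad,
\]
is an injective ring homomorphism. Well-definedness as a ring map uses that $K_2$ centralizes $K_1[F]$ inside $\D_2$ and hence centralizes the division closure $\D_1$, because the centralizer of a set in a division ring is closed under sums, products, and inverses. Injectivity of $\mu$ is the field-coefficient analog of Property~(P4): it expresses the linear disjointness of $K_2$ and $\D_1$ over $K_1$ inside $\D_2$, and for free groups it comes out of the same Hughes-uniqueness considerations that underpin the identification of the division closure with $\D_{K_1[F]}$ recalled before the lemma.

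Granted injectivity, the image of $\mu$ is a right $\D_1$-submodule of the right $\D_1$-vector space $\D_2$ and is therefore a direct $\D_1$-summand, hence also a direct right $K_1[F]$-summand. Tensoring over $K_1[F]$ with $M$ preserves this injection, giving
\[
(K_2\otimes_{K_1}\D_1)\otimes_{K_1[F]}M\;\hookrightarrow\;\D_2\otimes_{K_1[F]}M\;=\;\D_2\otimes_{K_2[F]}(K_2\otimes_{K_1}M).
\]
Composed with the earlier injection this yields $K_2\otimes_{K_1}M\hookrightarrow \D_2\otimes_{K_2[F]}(K_2\otimes_{K_1}M)$, which is exactly the $\D_{K_2[F]}$-torsion-freeness of $K_2\otimes_{K_1}M$. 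The main obstacle is the injectivity of $\mu$; everything else is routine tensor-product bookkeeping using flatness of $K_2$ over $K_1$ and the fact that $\D_2$ is free as a right $\D_1$-module.
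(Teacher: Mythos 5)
Your proof has the right structure and is essentially the argument the paper has in mind: the lemma is the coefficient-field analogue of Proposition~\ref{extabs}, and the paper derives it from the identification of the division closure of $K_1[F]$ inside $\D_{K_2[F]}$ with $\D_{K_1[F]}$. All of your tensor bookkeeping is fine: exactness of $K_2\otimes_{K_1}-$, the observation that the image of $\mu$ is a right $\D_{K_1[F]}$-submodule of $\D_{K_2[F]}$ and hence a direct summand as a right $K_1[F]$-module, and the base-change identity $\D_{K_2[F]}\otimes_{K_1[F]}M\cong \D_{K_2[F]}\otimes_{K_2[F]}(K_2\otimes_{K_1}M)$.

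The one step you should justify properly is the injectivity of $\mu\colon K_2\otimes_{K_1}\D_{K_1[F]}\to \D_{K_2[F]}$. You are right that this is the crux, but attributing it to ``the same Hughes-uniqueness considerations'' is not quite accurate: Hughes' theorem compares Hughes-free division rings of fractions over a \emph{fixed} coefficient field and says nothing by itself about linear disjointness of an extension of coefficient fields. The clean way to get injectivity is the Malcev--Neumann realization used to construct these division rings in the first place: fix a bi-ordering of $F$, embed $\D_{K_i[F]}$ as the division closure of $K_i[F]$ in the power series ring $K_i((F))$, and note that elements of $K_2$ that are linearly independent over $K_1$ remain linearly independent over $K_1((F))$ inside $K_2((F))$, by comparing coefficients of group elements. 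With that substitution (or a citation to the literature on coefficient-field extensions of free fields) your proof is complete.
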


\subsection{$\D_{\F_2[F]}$-subgroup rigidity of free groups} \label{casep=2}
We say that $F$ is $\D_{K[F]}$-{\bf subgroup rigid} if for any finitely generated subgroup $H$ of $F$, there exists a subgroup $\widetilde H$ of $F$ containing $H$ such that 
$$\tor_{K[F]}(I_{K[F]}/{}^FI_{K[H]})={}^FI_{K[\widetilde H]}/{}^FI_{K[H]}.$$ In this section we prove the following analog of \cref{main}.
\begin{teo}\label{mainF2} Let $F$ be a free group. Then $F$ is
 $\D_{\F_2[F]}$-subgroup rigid.
\end{teo}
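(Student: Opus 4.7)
My plan is to define $\widetilde H$ via an extremality condition and then show it captures all of $\tor_{\F_2[F]}(I_{\F_2[F]}/{}^FI_{\F_2[H]})$. Consider the collection
$$\mathcal T=\{T : H\le T\le F,\ {}^FI_{\F_2[T]}/{}^FI_{\F_2[H]}\text{ is }\D_{\F_2[F]}\text{-torsion}\}.$$
For $T_1,T_2\in\mathcal T$, the identity $t_1t_2-1=t_1(t_2-1)+(t_1-1)$ yields ${}^FI_{\F_2[\langle T_1,T_2\rangle]}={}^FI_{\F_2[T_1]}+{}^FI_{\F_2[T_2]}$, so the corresponding submodules add, and a sum of two $\D_{\F_2[F]}$-torsion submodules is torsion; hence $\mathcal T$ is closed under joins. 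After reducing to finitely generated $F$ (by replacing $F$ with a finitely generated subgroup containing $H$ and all of $\mathcal A_{H\le F}$, which is finite by Proposition \ref{Afinite}), only finitely many subgroups lie in $\mathcal T$, yielding a unique maximum $\widetilde H$.

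The inclusion ${}^FI_{\F_2[\widetilde H]}/{}^FI_{\F_2[H]}\subseteq\tor_{\F_2[F]}(I_{\F_2[F]}/{}^FI_{\F_2[H]})$ is by definition of $\mathcal T$. For the reverse inclusion, it is equivalent to show that $\widetilde H$ is $\D_{\F_2[F]}$-independent in $F$, i.e., $M':=I_{\F_2[F]}/{}^FI_{\F_2[\widetilde H]}$ is $\D_{\F_2[F]}$-torsion-free. Suppose for contradiction $N:=\tor_{\F_2[F]}(M')\ne 0$; by Corollary \ref{torfp} and Propositions \ref{proptor} and \ref{positiveEuler}, $N$ is $\FL$ with $\chi^{\F_2[F]}(N)=-\beta_1^{\F_2[F]}(M')<0$. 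The plan is then to produce a subgroup $T\supsetneq \widetilde H$ with ${}^FI_{\F_2[T]}/{}^FI_{\F_2[\widetilde H]}\subseteq N$, contradicting the maximality of $\widetilde H$ within $\mathcal T$.

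Producing this $T$ is the main obstacle and the point where characteristic two is essential. Let $J\subseteq\F_2[F]$ denote the preimage of $N$ under $\F_2[F]\twoheadrightarrow \F_2[F]/{}^FI_{\F_2[\widetilde H]}$; it is a left ideal with ${}^FI_{\F_2[\widetilde H]}\subsetneq J\subseteq I_{\F_2[F]}$, and it suffices to exhibit $f\in F\setminus\widetilde H$ with $f-1\in J$. My plan is to exploit the fact that over $\F_2$ each element of $\F_2[F/\widetilde H]$ has a canonical support on $F/\widetilde H$ as a subset rather than a weighted combination, so a $\D_{\F_2[F]}$-torsion relation in $M'$ carries an intrinsic combinatorial interpretation on the Schreier coset graph of $\widetilde H$, accessible through the Stallings-graph description coming from Proposition \ref{Afinite}. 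The nonvanishing torsion relation, together with the negative Euler characteristic estimate above, is then used to force two cosets to be identified in $M'/N$, producing $f\widetilde H\ne\widetilde H$ with $f-1\in J$ and hence a strict extension $\langle\widetilde H,f\rangle\in\mathcal T$, the desired contradiction. The canonical-support mechanism is unavailable in characteristic zero, which is why the mod-$2$ case is established separately as the main preliminary step toward Theorem \ref{main}.
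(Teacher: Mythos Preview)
Your setup via $\mathcal T$ is reasonable, but note a minor slip: $\mathcal T$ is essentially never finite (once the theorem is known, $\mathcal T$ consists of \emph{all} subgroups between $H$ and its closure). This is harmless, since the maximum exists anyway as $\widetilde H=\langle T:T\in\mathcal T\rangle$. The genuine gap is the last step. You assert that a nonzero $N=\tor_{\F_2[F]}(M')$ forces some $f\notin\widetilde H$ with $f-1\in J$, i.e.\ that two cosets in $\F_2[F/\widetilde H]$ become equal modulo $N$, but you give no mechanism beyond invoking ``canonical supports,'' ``Stallings graphs,'' and ``negative Euler characteristic.'' There is no evident reason a torsion element of $\F_2[F/\widetilde H]$ must involve only two cosets, or that the $\F_2[F]$-module it generates must contain a two-term element; a priori a torsion element could be a sum of four or more cosets with no pairwise identifications forced. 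Producing such an $f$ is essentially the content of the theorem applied to $\widetilde H$, so without an induction your argument is circular. Proposition~\ref{Afinite}, which you cite here, is about finiteness of $\mathcal A_{H\le F}$ and supplies no Stallings-graph mechanism of the sort you need.

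The paper proceeds in the opposite direction: instead of seeking group-like elements \emph{inside} the torsion, it finds large submodules \emph{containing} it, and runs an induction over the finite poset $\mathcal A_{H\le F}$. The characteristic-$2$ input is concrete (Lemma~\ref{maxsubm}): by a counting argument, every $\F_2[L]$-submodule of $I_{\F_2[L]}$ of codimension one with trivial quotient is ${}^LI_{\F_2[U]}$ for some index-$2$ subgroup $U\le L$. One takes a minimal $L$ in $\mathcal A_{H\le F}$ for which rigidity fails; since $\tor_{\F_2[L]}(M)\ne M$, Property~(P1) gives a trivial $\F_2$-quotient of $M/\tor(M)$, and Lemma~\ref{maxsubm} then yields an index-$2$ subgroup $U\le L$ with $\tor_{\F_2[L]}(M)\subseteq{}^LI_{\F_2[U]}/{}^LI_{\F_2[H]}$. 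By minimality (together with a comparison of torsion under passage to free factors, Claims~\ref{indtor} and \ref{nonex}), rigidity holds at level $U$, producing $\widetilde H\le U$; Claim~\ref{indtor} then lifts this back to $L$, contradicting $L\in\mathcal S$. The descent to an index-$2$ subgroup is what makes the induction terminate, and it is driven by Lemma~\ref{maxsubm}, not by locating elements $f-1$ inside the torsion.
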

First we will prove  an auxiliary result.
It describes the $K[F]$-submodules of $I_{K[F]}$ of codimension 1 in the case $K=\F_2$. This is where we strongly use the condition $K=\F_2$.
\begin {lem}\label{maxsubm} Let $F$ be a finitely generated free group.
Let $N$ be a left $\F_2[F]$-submodule of $I_{\F_2[F]}$ such that $I_{\F_2[F]}/N\cong \F_2$ is the trivial $\F_2[F]$-module of dimension 1 over $\F_2$. Then there exists a subgroup $U$ of $F$ of index 2 such that $N={}^FI_{\F_2[U]}$.
\end{lem}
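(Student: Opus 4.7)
The plan is to use the elementary fact that, for $F$ free on $x_1,\ldots,x_n$, the augmentation ideal $I_{\F_2[F]}$ is a free left $\F_2[F]$-module with basis $\{x_1-1,\ldots,x_n-1\}$ (this is the base step of Fox's free resolution of the trivial module). Because the quotient $I_{\F_2[F]}/N\cong \F_2$ carries the trivial $F$-action, the quotient map $\pi\colon I_{\F_2[F]}\to \F_2$ is completely determined by the tuple $(a_1,\ldots,a_n)\in \F_2^n$ of values $a_i=\pi(x_i-1)$; conversely any such tuple extends uniquely to a $\F_2[F]$-linear map into the trivial module. Since $\pi$ is surjective, this tuple is nonzero.

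From this data I would build the required index-$2$ subgroup as follows. Let $\psi\colon F\to \Z/2\Z$ be the surjective group homomorphism defined by $\psi(x_i)=a_i$, and set $U=\ker\psi$, so $[F:U]=2$. To identify $N$ with ${}^FI_{\F_2[U]}$, I would compare $\pi$ with the map induced by $\psi$. The ring homomorphism $\Psi\colon \F_2[F]\to \F_2[F/U]=\F_2[\Z/2\Z]$ extending $\psi$ has kernel exactly ${}^FI_{\F_2[U]}$, by the identification $\F_2[F]/{}^FI_{\F_2[U]}\cong \F_2[F/U]$ recalled in the proof of Lemma \ref{permutationmodule}. Its restriction to $I_{\F_2[F]}$ lands in $I_{\F_2[\Z/2\Z]}$, and one computes $\Psi(x_i-1)=\psi(x_i)-1=a_i\cdot(\sigma-1)$, where $\sigma$ is the nontrivial element of $\Z/2\Z$.

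Here is the key point, and also the only place the hypothesis $K=\F_2$ really matters: for $p=2$ the augmentation ideal $I_{\F_p[\Z/p\Z]}$ is one-dimensional over $\F_p$ and carries the trivial $F$-action, whereas for $p>2$ it is $(p-1)$-dimensional and nontrivial as a module, so the comparison below would fail. Using the $\F_2[F]$-isomorphism $I_{\F_2[\Z/2\Z]}\cong \F_2$ sending $\sigma-1$ to $1$, the restriction of $\Psi$ to $I_{\F_2[F]}$ sends $x_i-1$ to $a_i$, which is precisely $\pi(x_i-1)$. Hence $\Psi|_{I_{\F_2[F]}}=\pi$, and therefore $N=\ker\pi=\ker\bigl(\Psi|_{I_{\F_2[F]}}\bigr)={}^FI_{\F_2[U]}$. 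There is no genuine technical obstacle once one has the free basis of $I_{\F_2[F]}$ in hand; the argument is essentially the observation that group homomorphisms $F\to \F_2$ correspond to $\F_2[F]$-module maps $I_{\F_2[F]}\to \F_2$ into the trivial module, combined with one clean identification of kernels.
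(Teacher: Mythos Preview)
Your argument is correct. It differs from the paper's proof, which is a pure counting argument: the paper observes that the submodules $N$ in question are exactly the kernels of the nonzero $\F_2[F]$-maps $I_{\F_2[F]}\to\F_2$, of which there are $2^{\rk(F)}-1$ (here one uses that over $\F_2$ two nonzero maps into $\F_2$ with the same kernel must be equal), and that there are also $2^{\rk(F)}-1$ index-$2$ subgroups $U$, each contributing a distinct ${}^FI_{\F_2[U]}$ of the required shape; equality of cardinalities finishes the proof. You instead build the bijection explicitly: from $\pi$ you read off a homomorphism $\psi\colon F\to\Z/2\Z$, set $U=\ker\psi$, and then identify $\Psi|_{I_{\F_2[F]}}$ with $\pi$ via the one-dimensional trivial module $I_{\F_2[\Z/2\Z]}$. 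Your approach is a bit longer but more informative---it makes the correspondence $H^1(F,\F_2)\cong\Hom_{\F_2[F]}(I_{\F_2[F]},\F_2)$ explicit and, as you note, pinpoints exactly where $p=2$ is used (namely that $I_{\F_p[\Z/p\Z]}$ is the trivial module only for $p=2$). The paper's proof is terser and avoids the explicit identification of kernels.
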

\begin{proof}Consider the set $\mathcal S$ of left $\F_2[F]$-submodules $N$ of $I_{\F_2[F]}$ such that $I_{\F_2[F]}/N\cong \F_2$. Then $\mathcal S$ consists of the kernels of non-trivial $\F_2[F]$-homomorphisms from $I_{\F_2[F]}$ to $\F_2$. Since, $\rk(I_{\F_2[F]})=\rk(F)$, $\mathcal S$ has $2^{\rk(F)}-1$ submodules. If $U$ is a subgroup of  $F$ of index 2, $\dim_{\F_2}\F_2[F]/{}^FI_{\F_2[U]}=2$, and so, $\dim_{\F_2}I_{\F_2[F]}/{}^FI_{\F_2[U]}=1$. Thus, ${}^FI_{\F_2[U]}\in S$. Since, the number of subgroups of  $F$ of index 2 is $2^{\rk(F)}-1$, we obtain the lemma.
\end{proof}

\begin{proof}[Proof of \cref{mainF2}] We start the proof with the following useful claim.
\begin{claim} \label{indtor} 
Let $H \leq U \leq L \leq F$.  
Then the $\mathbb{F}_2[L]$-module 
\[
\tor_{\mathbb{F}_2[L]}\!\left({}^L I_{\mathbb{F}_2[U]} \,\big/\, {}^L I_{\mathbb{F}_2[H]}\right)
\]
is equal to the $\mathbb{F}_2[L]$-submodule of 
$
{}^L I_{\mathbb{F}_2[U]} \,\big/\, {}^L I_{\mathbb{F}_2[H]}
$
generated by 
\[
\tor_{\mathbb{F}_2[U]}\!\left(I_{\mathbb{F}_2[U]} \,\big/\, {}^U I_{\mathbb{F}_2[H]}\right).
\]
\end{claim}

\begin{proof} Recall  that $\D_U$ denotes the division closure of $\F_2[U]$ in $\D_{\F_2[F]}$. Since $\D_U$ is isomorphic to $\D_{\F_2[U]}$ as a $\F_2[U]$-ring, we have that 
$$\tor_{\F_2[U]} ( I_{\F_2[U]}/{}^UI_{\F_2[H]})\le \tor_{\F_2[L]}({}^LI_{\F_2[U]}/{}^LI_{\F_2[H]}).$$ 
On the other hand, let $N_0= I_{\F_2[U]}/{}^UI_{\F_2[H]}$,  $N={}^LI_{\F_2[U]}/{}^LI_{\F_2[H]}$ and $N_1$ be  the $\F_2[L]$-submodule of $N$ generated by 
$\tor_{\F_2[U]}(N_0)$.
By \cref{isomdiscr}, we know that $N\cong \F_2[L]\otimes_{\F_2[U]} N_0$. Hence $N/N_1$ is isomorphic to    $\F_2[L]\otimes_{\F_2[U]} (N_0/\tor_{\F_2[U]}(N_0))$. By  \cref{extabs}, $N/N_1$  is $\D_{\F_2[U]}$-torsion-free. Hence 
$\tor_{\F_2[L]}(N)\le N_1.$ 
\renewcommand\qedsymbol{$\diamond$}\end{proof}

Let   $H$ be a finitely generated subgroup of $F$. Define
\begin{align*}\mathcal S=  \{H\le L\le F: & \textrm{ there exists no subgroup $\widetilde H\le L$ containing $H$ such that}  \\ & \tor_{\F_2[L]}(I_{\F_2[L]}/{}^LI_{\F_2[H]}) 
\textrm{ is equal 
to\ } 
{}^L I_{\F_2[\widetilde H]}/{}^LI_{\F_2[H]} \}.\end{align*}
\begin{claim}\label{nonex} Let $H\le L\le F$ and let $U$ be a free factor  of $L$  containing $H$. If $L\in \mathcal S$, then $U\in \mathcal S$.  
In particular,  if $\mathcal S\ne \emptyset$, then  $\mathcal S\cap  \mathcal A_{H\leq L}\neq \emptyset$.  
\end{claim}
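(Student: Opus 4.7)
I plan to prove the first statement by contrapositive: if $U \notin \mathcal S$ with witness $\widetilde H$, I will show that the same $\widetilde H$ witnesses $L \notin \mathcal S$. Claim~\ref{indtor} already identifies $\tor_{\F_2[L]}({}^L I_{\F_2[U]}/{}^L I_{\F_2[H]})$ with the $\F_2[L]$-submodule generated by $\tor_{\F_2[U]}(I_{\F_2[U]}/{}^U I_{\F_2[H]})$, so the only remaining task is to compare $\tor_{\F_2[L]}(I_{\F_2[L]}/{}^L I_{\F_2[H]})$ with $\tor_{\F_2[L]}({}^L I_{\F_2[U]}/{}^L I_{\F_2[H]})$. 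This is where the free factor hypothesis enters, and I expect it to be the only substantial step.

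The key input is a Fox-calculus splitting: writing $L = U * V$ and concatenating bases of $U$ and $V$ to form a basis of $L$, the description of $I_{\F_2[L]}$ as the free $\F_2[L]$-module on the ``$x-1$'' generators yields a decomposition of left $\F_2[L]$-modules
\[ I_{\F_2[L]} = {}^L I_{\F_2[U]} \oplus {}^L I_{\F_2[V]}, \]
with ${}^L I_{\F_2[V]}$ free over $\F_2[L]$, hence $\D_{\F_2[L]}$-torsion-free. Since ${}^L I_{\F_2[H]} \subseteq {}^L I_{\F_2[U]}$, modding out preserves the splitting, so the two torsions above coincide. Combining with Claim~\ref{indtor} and Lemma~\ref{isomdiscr} (the latter converting $\F_2[L]$-spans of $\F_2[U]$-submodules into quotients of augmentation ideals), the hypothesis $U \notin \mathcal S$ yields $\tor_{\F_2[L]}(I_{\F_2[L]}/{}^L I_{\F_2[H]}) = {}^L I_{\F_2[\widetilde H]}/{}^L I_{\F_2[H]}$, contradicting $L \in \mathcal S$.

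For the ``in particular'' conclusion I iterate. Starting from $L \in \mathcal S$, if $H \le L$ is not algebraic then $H$ lies in a proper free factor of $L$, which by the first part stays in $\mathcal S$. Since $H$ is finitely generated, a finite generating set of $H$ involves only finitely many letters of any chosen basis of $L$, so the first descent step can be taken inside a finitely generated free factor; thereafter rank strictly decreases, forcing termination at some $L_n \in \mathcal S \cap \mathcal A_{H \le L}$. I foresee no obstacle beyond recognizing the free factor decomposition of $I_{\F_2[L]}$ highlighted above; the rest is routine bookkeeping with the already established results.
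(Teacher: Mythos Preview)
Your proof is correct and follows essentially the same route as the paper: the free-factor splitting $I_{\F_2[L]}={}^LI_{\F_2[U]}\oplus{}^LI_{\F_2[V]}$ reduces the torsion computation to the $U$-summand, and Claim~\ref{indtor} then transfers the witness $\widetilde H$ from $U$ to $L$, giving the contrapositive. Your treatment of the ``in particular'' clause (first passing to a finitely generated free factor, then descending by rank) is more explicit than the paper's, which leaves that step to the reader.
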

\begin{proof}  \renewcommand\qedsymbol{$\diamond$}   Let $U\leq L$ such that $L=U*U_1$. Then 
  $$I_{\F_2[L]}/{}^LI_{\F_2[H]}=\left ({}^{L}I_{\F_2[U]}/{}^LI_{\F_2[H]}\right ) \oplus {}^LI_{\F_2[U_1]}.$$
  Hence $\tor_{\F_2[L]}(I_{\F_2[L]}/{}^LI_{\F_2[H]})=\tor_{\F_2[F]}({}^{L}I_{\F_2[U]}/{}^LI_{\F_2[H]})$, and so by \cref{indtor}, $U\in \mathcal S$.
\renewcommand\qedsymbol{$\diamond$}\end{proof}
Assume that  $F\in \mathcal S$. We want to obtain a contradiction.
 By \cref{nonex}, $ \mathcal S\cap  \mathcal A_{H\leq F} \neq \emptyset$.  By \cref{Afinite} there exists  a minimal subgroup $L$ in $\mathcal S\cap  \mathcal A_{H\leq F}$. In particular $L$ is finitely generated.

Put $M=I_{\F_2[L]}/{}^LI_{\F_2[H]}$. Since $L\in \mathcal S$, 
 $\tor_{\F_2[L]}(M)\ne \{0\}$.
  \begin{claim} \label{subgroupindex2}
There exists a  subgroup $U$ of  $L$ of  index 2 such that $$\tor_{\F_2[L]}(M)\leq {}^LI_{\F_2[U]}/{}^LI_{\F_2[H]}.$$ 
\end{claim}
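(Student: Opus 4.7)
The plan is to reduce the claim to producing a surjection of left $\F_2[L]$-modules $M\twoheadrightarrow \F_2$ (the trivial module) whose kernel contains $T := \tor_{\F_2[L]}(M)$. Given such a surjection, its kernel pulls back to a submodule $N \leq I_{\F_2[L]}$ containing ${}^LI_{\F_2[H]}$ with $I_{\F_2[L]}/N \cong \F_2$; Lemma \ref{maxsubm} writes $N = {}^LI_{\F_2[U]}$ for a unique index-$2$ subgroup $U \leq L$, Lemma \ref{permutationmodule} delivers $H \leq U$ from ${}^LI_{\F_2[H]} \leq {}^LI_{\F_2[U]}$, and $T \leq N/{}^LI_{\F_2[H]} = {}^LI_{\F_2[U]}/{}^LI_{\F_2[H]}$ is the asserted containment.

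The first thing to verify is $M/T \neq 0$, which is built into the hypothesis $L \in \mathcal S$: the choice $\widetilde H = L$ gives ${}^LI_{\F_2[L]}/{}^LI_{\F_2[H]} = M$, so the equality $T = M$ would witness $L \notin \mathcal S$.

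The heart of the argument is to show that every nonzero finitely generated $\D_{\F_2[L]}$-torsion-free $\F_2[L]$-module $Q$ admits a nonzero map $Q \to \F_2$. Since $Q$ embeds into the finite-dimensional $\D_{\F_2[L]}$-vector space $\D_{\F_2[L]} \otimes_{\F_2[L]} Q$, a short induction on $n$ using Property (P3), Property (P2) and the long exact $\Tor$-sequence for the splitting $\D_{\F_2[L]}^n = \D_{\F_2[L]} \oplus \D_{\F_2[L]}^{n-1}$ shows that every $\F_2[L]$-submodule of $\D_{\F_2[L]}^n$ has vanishing $\beta_1^{\F_2[L]}$. Proposition \ref{FLZF} (in its $\F_2[L]$ version) then provides a resolution
\[
0 \to \F_2[L]^r \to \F_2[L]^d \to Q \to 0.
\]
Tensoring with $\D_{\F_2[L]}$ yields $d - r = \dim_{\D_{\F_2[L]}} \D_{\F_2[L]} \otimes_{\F_2[L]} Q$, while tensoring with $\F_2$ produces $H_0(L, Q) = \coker(\F_2^r \to \F_2^d)$. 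If $H_0(L, Q)$ vanished then $r \geq d$, so $\D_{\F_2[L]} \otimes_{\F_2[L]} Q = 0$; since $Q$ embeds in this vector space, $Q = 0$, contradicting nontriviality. Applying this with $Q = M/T$ produces the desired surjection.

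The main technical point to get right is the bootstrap from Property (P3), which is stated only for submodules of $\D_{\F_2[L]}$ itself, to the analogous vanishing of $\beta_1^{\F_2[L]}$ for submodules of $\D_{\F_2[L]}^n$; once that is in hand, the rest is a clean comparison of the two base-changes of a two-term free $\F_2[L]$-resolution, combined with Lemma \ref{maxsubm}. It is worth noting that neither the algebraicity of $H \leq L$ nor the minimality of $L$ in $\mathcal S \cap \mathcal A_{H \leq F}$ enters the proof of this particular claim; those hypotheses presumably come into play in the subsequent step, when the index-$2$ subgroup $U$ is leveraged to contradict the minimality and build the $L^2$-closure.
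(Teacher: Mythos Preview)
Your argument is correct and reaches the same endpoint as the paper (a surjection $M/T\twoheadrightarrow \F_2$, then Lemma~\ref{maxsubm}), but the route to ``$M/T$ has nontrivial $\F_2$-coinvariants'' is different. The paper gets this in one line from Property~(P1): since $M/T$ is nonzero and $\D_{\F_2[L]}$-torsion-free, $\beta_0^{\F_2[L]}(M/T)>0$, and universality against the augmentation $\F_2[L]\to\F_2$ forces $\dim_{\F_2}\bigl(\F_2\otimes_{\F_2[L]}(M/T)\bigr)\geq\beta_0^{\F_2[L]}(M/T)>0$. You instead bootstrap Property~(P3) from $\D_{\F_2[L]}$ to $\D_{\F_2[L]}^{\,n}$, conclude $\beta_1^{\F_2[L]}(M/T)=0$, invoke Proposition~\ref{FLZF} for a two-term free resolution, and compare the base-changes to $\D_{\F_2[L]}$ and to $\F_2$. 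In effect you are re-deriving the $\mathcal E=\F_2$ instance of (P1) from (P3); this is a perfectly legitimate alternative, and has the mild advantage of not invoking the full strength of universality, but it is longer. (Incidentally, Property~(P2) is not actually needed in your inductive step: the exact piece $\Tor_1(\D,Q_1)\to\Tor_1(\D,Q)\to\Tor_1(\D,Q_2)$ already kills $\Tor_1(\D,Q)$.) Your closing remark is accurate: neither algebraicity nor minimality of $L$ is used in this claim.
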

\begin{proof} Since $L\in \mathcal S$, 
 $\tor_{\F_2[L]}(M)\ne M$. Therefore, $$\beta_0^{\F_2[L]}(M/\tor_{\F_2[L]}(M))>0.$$ 
Therefore, by Property (P1), $\F_2\otimes_{\F_2[L]}(M/\tor_{\F_2[L]}(M))$ is not trivial. Hence there exists  a left $\F_2[L]$-submodule $N$ of $M$ such that 
 $\tor_{\F_2[L]}(M)\leq N  <M$ and $M/N \cong \F_2$. By \cref{maxsubm}, $N ={}^LI_{\F_2[U]}/{}^LI_{\F_2[H]}$ for some subgroup $U$ of index 2 in $L$. 
\renewcommand\qedsymbol{$\diamond$}\end{proof}
Take $U$ from the previous claim. From \cref{proptor} we obtain that
\begin{equation}
\label{eqtor}
\tor_{\F_2[L]}(M)=\tor_{\F_2[L]} \left ({}^LI_{\F_2[U]}\big /{}^LI_{\F_2[H]}\right ).\end{equation}

By our choice of $L$ and \cref{nonex}, there exists a subgroup $\widetilde H$ of $U$ containing $H$, such that 
\begin{equation}\label{limittor}
\tor_{\F_2[U]}(I_{\F_2[U]}/{}^UI_{\F_2[H]})={}^UI_{\F_2[\widetilde H]}/{}^UI_{\F_2[H]}.
\end{equation}
Hence,
$$\tor_{\F_2[L]}(M)\myeq{\cref{eqtor}}\tor_{\F_2[L]}({}^LI_{\F_2[U]}/{}^LI_{\F_2[H]})\myeq{\cref{indtor} and \cref{limittor}}{}^LI_{\F_2[\widetilde H]}/{}^LI_{\F_2[H]}.$$
This is a contradiction that finishes the proof of the theorem.
\end{proof}

Let $K$ be  a field and $H$ a subgroup of a free group $F$. Assume that there exists a subgroup $\widetilde H$ of $F$ containing $H$  such that 
$$\tor_{K[F]}(I_{K[F]}/{}^FI_{K[H]})={}^FI_{K[\widetilde H]}/{}^FI_{K[H]}.$$
By \cref{permutationmodule}, $\widetilde H$ is unique and 
 we say that $\widetilde H$ is  the {\bf $\D_{K[F]}$-closure} of $H$. By \cref{equivdef}, the notion of $L^2$-closure introduced in the introduction and the notion of $\mathcal{D}_{\mathbb{Q}[F]}$-closure coincide.

 \cref{mainF2} implies that for every finitely generated subgroup $H$ of $G$ the $\D_{\F_2[F]}$-closure of $H$ can be defined. 
In fact, we can also conclude that the $\D_{\F_2[F]}$-closure of an arbitrary subgroup $H$ of $F$ can be defined. Indeed, let $H = \bigcup H_i$ be expressed as a union  of finitely generated subgroups. Then the $\D_{\F_2[F]}$-closure of $H$ coincides with the union of the $\D_{\F_2[F]}$-closures of the subgroups $H_i$.

By \cref{extcuerpostf}, if $K$ is a field of characteristic 2, then 
$\D_{K[F]}$-closure of $H$ coincides with its $\D_{\F_2[F]}$-closure.

\subsection{Proof of \cref{main}}
Let $F$ be a free group and $H$ a finitely generated subgroup of $F$. Let $\widetilde H$ be the $\D_{\F_2[F]}$-closure of $H$. In particular by \cref{proptor}, we have that
\begin{equation}\label {Euler=tor1}
\beta_0^{\F_2[F]}({}^FI_{\F_2[\widetilde H]}/{}^FI_{\F_2[H]})=0.
\end{equation}
 We want to show that 
$$\tor_{\Q[F]}(I_{\Q[F]}/{}^FI_{\Q[H]})={}^FI_{\Q[\widetilde H]}/{}^FI_{\Q[H]}.$$
This will follow from the following two claims.
\begin{claim} 
We have that $\beta_0^{\Q[F]}({}^FI_{\Q[\widetilde H]}/{}^FI_{\Q[H]})=0$. In particular, $$ {}^FI_{\Q[\widetilde H]}/{}^FI_{\Q[H]}\leq \tor_{\Q[F]}(I_{\Q[F]}/{}^FI_{\Q[H]}).$$

\end{claim}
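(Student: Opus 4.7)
The plan is to transfer the known vanishing $\beta_0^{\F_2[F]}(M_{\F_2})=0$ (where $M_{\F_2}:={}^FI_{\F_2[\widetilde H]}/{}^FI_{\F_2[H]}$) from characteristic $2$ to characteristic $0$ by lifting to a $\Z[F]$-module and invoking the comparison inequality (\ref{Qp}) that comes from Property (P1).

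First I would introduce the integral version $M_\Z:={}^FI_{\Z[\widetilde H]}/{}^FI_{\Z[H]}$ together with $M_\Q:={}^FI_{\Q[\widetilde H]}/{}^FI_{\Q[H]}$, and check that $M_\Z$ is $\Z$-torsion-free. This is because $\Z[F]/{}^FI_{\Z[H]}\cong \Z[F/H]$ is $\Z$-free, so its submodule $I_{\Z[F]}/{}^FI_{\Z[H]}$ is $\Z$-torsion-free, and so is $M_\Z$. A short diagram chase using the $\Z$-torsion-freeness of $\Z[F/H]$ and $\Z[F/\widetilde H]$ then identifies $\F_2\otimes_\Z M_\Z\cong M_{\F_2}$ and $\Q\otimes_\Z M_\Z\cong M_\Q$.

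The second step is to show that $\widetilde H$ is finitely generated, which in turn provides a finite presentation of $M_\Z$ as a $\Z[F]$-module. The module $M_{\F_2}$ equals $\tor_{\F_2[F]}(I_{\F_2[F]}/{}^FI_{\F_2[H]})$, which is $\FL$ by Corollary \ref{torfp}, hence finitely generated. Write the finitely many $\F_2[F]$-generators as sums of terms $a(b-1)$ with $a\in \F_2[F]$ and $b\in \widetilde H$, and let $\widetilde H'$ be the subgroup of $\widetilde H$ generated by $H$ together with the finitely many $b$'s appearing. Then $\widetilde H'$ is finitely generated, all chosen generators lie in ${}^FI_{\F_2[\widetilde H']}$, so ${}^FI_{\F_2[\widetilde H']}={}^FI_{\F_2[\widetilde H]}$; by Lemma \ref{permutationmodule} this forces $\widetilde H'=\widetilde H$. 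Now applying the exact (flat) functor $\Z[F]\otimes_{\Z[\widetilde H]}(-)$ to the obvious presentation $\Z[\widetilde H]^{\rk(H)}\to \Z[\widetilde H]^{\rk(\widetilde H)}\to I_{\Z[\widetilde H]}/{}^{\widetilde H}I_{\Z[H]}\to 0$ and using Lemma \ref{isomdiscr} yields a finite $\Z[F]$-presentation of $M_\Z$.

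Finally I would invoke (\ref{Qp}): since $M_\Z$ is finitely presented over $\Z[F]$,
\[
\beta_0^{\Q[F]}(M_\Q)=\beta_0^{\Z[F]}(M_\Z)\le \beta_0^{\F_2[F]}(M_{\F_2})=0,
\]
where the last equality is (\ref{Euler=tor1}). Hence $\D_{\Q[F]}\otimes_{\Q[F]}M_\Q=0$, so the composition $M_\Q\hookrightarrow I_{\Q[F]}/{}^FI_{\Q[H]}\to \D_{\Q[F]}\otimes_{\Q[F]}(I_{\Q[F]}/{}^FI_{\Q[H]})$ factors through $\D_{\Q[F]}\otimes_{\Q[F]}M_\Q=0$ and is zero, yielding the inclusion $M_\Q\le \tor_{\Q[F]}(I_{\Q[F]}/{}^FI_{\Q[H]})$. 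The only non-routine point is the middle step, namely establishing finite generation of $\widetilde H$ to ensure $M_\Z$ is finitely presented so that Property (P1) applies; this potential obstacle is resolved cleanly by combining the $\FL$-ness of $M_{\F_2}$ with the fact that $ {}^FI_{k[-]}$ determines the subgroup (Lemma \ref{permutationmodule}).
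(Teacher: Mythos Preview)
Your proof is correct and follows the same route as the paper: introduce the integral module $N={}^FI_{\Z[\widetilde H]}/{}^FI_{\Z[H]}$ and apply the comparison inequality (\ref{Qp}) together with (\ref{Euler=tor1}). You are in fact more careful than the paper, which invokes (\ref{Qp}) without pausing to check that $N$ is finitely presented; your argument that $\widetilde H$ is finitely generated (via the $\FL$-ness of $M_{\F_2}$ from Corollary \ref{torfp} and Lemma \ref{permutationmodule}) fills precisely this gap.
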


\begin{proof}\renewcommand\qedsymbol{$\diamond$}
Consider the left $\Z[F]$-module   $N={}^FI_{\Z[\widetilde H]}/{}^FI_{\Z[H]}$.
Then we obtain that 
$$
\beta_0^{\Q[F]}(\Q\otimes_{\Z} N)\myle{\cref{Qp}}\beta_0^{\F_2[F]}(\F_2\otimes_{\Z} N)\myeq{\cref{Euler=tor1}} 0.$$
Now, \cref{proptor} implies the claim.
\end{proof}

\begin{claim} 
The left $\Q[F]$-module $I_{\Q[F]}/{}^FI_{\Q[\widetilde H]}$ is $\D_{\Q[F]}$-torsion-free. In particular, $$ \tor_{\Q[F]}(I_{\Q[F]}/{}^FI_{\Q[H]})\leq {}^FI_{\Q[\widetilde H]}/{}^FI_{\Q[H]}.$$
\end{claim}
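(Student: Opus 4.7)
The plan is to show by contradiction that $T := \tor_{\Z[F]}(N') = 0$, where $N' := I_{\Z[F]}/{}^FI_{\Z[\widetilde H]}$. Since $\D_{\Z[F]} = \D_{\Q[F]}$, a direct check shows $\Q\otimes_\Z T = \tor_{\Q[F]}(I_{\Q[F]}/{}^FI_{\Q[\widetilde H]})$, so vanishing of $T$ yields the claim. Note that $N'$ is $\Z$-free (as a submodule of $\Z[F/\widetilde H]$) and finitely presented since $\widetilde H$ is finitely generated, so Corollary \ref{torfp} guarantees that $T$ is $\FL$, hence finitely generated and itself $\Z$-free. Moreover, by Theorem \ref{mainF2} and the defining property of the $\D_{\F_2[F]}$-closure, $N_2 := \F_2\otimes_\Z N' = I_{\F_2[F]}/{}^FI_{\F_2[\widetilde H]}$ is $\D_{\F_2[F]}$-torsion-free, and hence $\beta_1^{\F_2[F]}(N_2)=0$ by Proposition \ref{proptor}.

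The key identity is $T\cap 2N' = 2T$: the inclusion $2T \subseteq T\cap 2N'$ is clear, and for the converse, if $2n\in T$ with $n\in N'$ then dividing the relation $1\otimes 2n=0$ by $2$ inside the $\Q$-algebra $\D_{\Q[F]}\otimes_{\Z[F]}N'$ forces $n\in T$. Consequently $\bar T := \F_2\otimes_\Z T$ embeds as a $\F_2[F]$-submodule of $N_2$. Assume now that $T\neq 0$. Since $T$ is $\Z$-free, $\bigcap_k 2^k T = 0$, so $T\neq 2T$ and therefore $\bar T\neq 0$.

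I compute $\chi^{\F_2[F]}(\bar T)$ in two ways. Combining (\ref{Qp}) applied to $N'$ with Corollary \ref{eqEuler} (which makes $\chi^{\Q[F]}(\Q\otimes N') = \chi^{\F_2[F]}(N_2)$) gives $\beta_1^{\Q[F]}(\Q\otimes N')\le\beta_1^{\F_2[F]}(N_2)=0$. Then Corollary \ref{eqEuler} applied to $T$ and Proposition \ref{proptor} yield
\[
\chi^{\F_2[F]}(\bar T) = \chi^{\Z[F]}(T) = \chi^{\Q[F]}(\Q\otimes T) = -\beta_1^{\Q[F]}(\Q\otimes N') = 0.
\]
On the other hand, $\bar T$ is a nonzero $\D_{\F_2[F]}$-torsion-free submodule of $N_2$, so the injection $\bar T\hookrightarrow\D_{\F_2[F]}\otimes_{\F_2[F]}\bar T$ forces $\beta_0^{\F_2[F]}(\bar T)\ge 1$; and the long exact $\Tor$ sequence for $0\to\bar T\to N_2\to N_2/\bar T\to 0$, together with the vanishing $\Tor_2 = 0$ from Property (P2), produces an injection $\Tor_1^{\F_2[F]}(\D_{\F_2[F]},\bar T)\hookrightarrow \Tor_1^{\F_2[F]}(\D_{\F_2[F]},N_2)=0$. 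Hence $\beta_1^{\F_2[F]}(\bar T)=0$ and $\chi^{\F_2[F]}(\bar T)\ge 1$, contradicting the displayed equation.

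The main obstacle I expect is locating the $2$-saturation identity $T\cap 2N' = 2T$, which is exactly what converts the characteristic-$2$ torsion-freeness (available via Theorem \ref{mainF2}) into a usable constraint in characteristic $0$ and relies crucially on $\D_{\Q[F]}$ being a $\Q$-algebra.
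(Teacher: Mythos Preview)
Your proof is correct and follows essentially the same route as the paper's: work with the integral module $N'=I_{\Z[F]}/{}^FI_{\Z[\widetilde H]}$, show that $\F_2\otimes_\Z\tor_{\Z[F]}(N')$ embeds in the $\D_{\F_2[F]}$-torsion-free module $\F_2\otimes_\Z N'$, compare Euler characteristics across characteristics to force this reduction to vanish, and conclude $\tor_{\Z[F]}(N')=0$. Your explicit ``$2$-saturation'' identity $T\cap 2N'=2T$ is precisely what the paper encodes in the assertion that all three terms of $0\to\tor_{\Z[F]}(M)\to M\to M/\tor_{\Z[F]}(M)\to 0$ are $\Z$-torsion-free (so that tensoring with $\F_2$ stays exact); the paper then invokes Proposition~\ref{positiveEuler} directly where you unpack $\beta_0\ge 1$ and $\beta_1=0$ by hand, and it only needs $\chi\le 0$ rather than your sharper $\chi=0$, but these are presentational differences, not substantive ones.
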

\begin{proof}Put 
\(
M = I_{\mathbb{Z}[F]}/{}^{F}I_{\mathbb{Z}[\widetilde{H}]}.
\)
Assume that $\tor_{\mathbb{Z}[F]}(M) \neq \{0\}$. Observe that $M$ is free as a $\mathbb{Z}$-module and that $M / \tor_{\mathbb{Z}[F]}(M)$ is $\mathbb{Z}$-torsion-free. Thus, there exists $m \in \tor_{\mathbb{Z}[F]}(M)$ such that $m \notin 2M$.  
Put 
\[
\overline{M} = M / \mathbb{Z}[F]m.
\]

By \cref{Qp} we have
\begin{equation}\label{desQf2}
\beta_0^{\mathbb{Q}F}\!\left(\mathbb{Q} \otimes_{\mathbb{Z}} M\right) 
   \leq \beta_0^{\mathbb{F}_2 F}\!\left(\mathbb{F}_2 \otimes_{\mathbb{Z}} M\right),
   \quad\text{and}\quad
\beta_0^{\mathbb{Q}F}\!\left(\mathbb{Q} \otimes_{\mathbb{Z}} \overline{M}\right) 
   \leq \beta_0^{\mathbb{F}_2 F}\!\left(\mathbb{F}_2 \otimes_{\mathbb{Z}} \overline{M}\right).
\end{equation}

Since $m \in \tor_{\mathbb{Z}[F]}(M)$,
\begin{equation}
 \label{igQ}
\beta_0^{\mathbb{Q}F}\!\left(\mathbb{Q} \otimes_{\mathbb{Z}} M\right) 
   = \beta_0^{\mathbb{Q}F}\!\left(\mathbb{Q} \otimes_{\mathbb{Z}} \overline{M}\right),
\end{equation}
and since $m \notin 2M$ and $\mathbb{F}_2 \otimes_{\mathbb{Z}} \overline{M}$ is $\mathcal{D}_{\mathbb{F}_2[F]}$-torsion-free,
\begin{equation}
    \label{desF2}
\beta_0^{\mathbb{F}_2 F}\!\left(\mathbb{F}_2 \otimes_{\mathbb{Z}} M\right) 
   = \beta_0^{\mathbb{F}_2 F}\!\left(\mathbb{F}_2 \otimes_{\mathbb{Z}} \overline{M}\right) + 1.
\end{equation}

By Property (P3), 
\[
\beta_1^{\mathbb{F}_2[F]}\!\left(\mathbb{F}_2 \otimes_{\mathbb{Z}} M\right) = 0.
\] 
Thus, 
\begin{multline*}
\beta_0^{\mathbb{F}_2[F]}\!\left(\mathbb{F}_2 \otimes_{\mathbb{Z}} M\right) 
   = \chi^{\mathbb{F}_2[F]}\!\left(\mathbb{F}_2 \otimes_{\mathbb{Z}} M\right) =\operatorname{rk}(F) - \operatorname{rk}(\widetilde{H}) 
   = \\ \beta_0^{\mathbb{Q}F}\!\left(\mathbb{Q} \otimes_{\mathbb{Z}} M\right) 
     - \beta_1^{\mathbb{Q}F}\!\left(\mathbb{Q} \otimes_{\mathbb{Z}} M\right) 
   \leq \beta_0^{\mathbb{Q}F}\!\left(\mathbb{Q} \otimes_{\mathbb{Z}} M\right).
\end{multline*}

Therefore, taking into account \cref{desQf2}, we conclude that 
\begin{equation}
    \label{igQF2}
\beta_0^{\mathbb{F}_2[F]}\!\left(\mathbb{F}_2 \otimes_{\mathbb{Z}} M\right) 
   = \beta_0^{\mathbb{Q}F}\!\left(\mathbb{Q} \otimes_{\mathbb{Z}} M\right).
\end{equation}

Hence 

\begin{multline*}
\beta_0^{\mathbb{Q}F}\!\left(\mathbb{Q} \otimes_{\mathbb{Z}} \overline M\right) 
   \myeq{\cref{igQ}} \beta_0^{\mathbb{Q}F}\!\left(\mathbb{Q} \otimes_{\mathbb{Z}} M \right)
   \myeq{\cref{igQF2}}\\
   \beta_0^{\mathbb{F}_2[F]}\!\left(\mathbb{F}_2 \otimes_{\mathbb{Z}} M\right) 
   \myeq{\cref{desF2}} \beta_0^{\mathbb{F}_2[F]}\!\left(\mathbb{F}_2 \otimes_{\mathbb{Z}} \overline M\right) +1,
   \end{multline*}
 that contradicts \cref{desQf2}.
Therefore, $  \tor_{\Z[F]}(M)=0$.   Hence $ \Q\otimes_\Z M$ is  $\D_{\Q[F]}$-torsion-free.
 \renewcommand\qedsymbol{$\diamond$}\end{proof}

\subsection{Proof of   \cref{inertcompressed}} Let  $H$ be a finitely generated subgroup of $F$. In the introduction we have defined
$$
\bpi(H\leq F)=\min\{\rk(L)\colon H\le L\le F\} .$$
We obtain 
\cref{inertcompressed} as a consequence of the following result.
\begin{teo}\label{pibar}
Let $K$ be a field of characteristic 0 or 2, $F$  a free group and $H$ a finitely generated subgroup of $F$. Then 
\begin{equation}
\label{equalitypi}
\bpi(H\leq F) =\rk(H)-\beta_1^{K[F]}(I_{K[F]}/{}^FI_{K[H]}).
\end{equation}
Moreover, if $\widetilde H$ is the $\D_{K[F]}$-closure of $H$, then $\rk(\widetilde H)=\bpi(H\leq F) $.

\end{teo}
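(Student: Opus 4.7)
My strategy is to compute the Euler characteristic of $M = I_{K[F]}/{}^F I_{K[H]}$ in two different ways — directly from a finite free presentation, and via the short exact sequence $0 \to \tor_{K[F]}(M) \to M \to \widetilde M \to 0$ with $\widetilde M = I_{K[F]}/{}^F I_{K[\widetilde H]}$ — and then independently identify $\rk(\widetilde H)$ with $\bpi(H \le F)$. First I would reduce to the case where $F$ is finitely generated of rank $n$, by passing to a finitely generated free factor of $F$ containing $H$; both $\bpi(H \le F)$ and $\beta_1^{K[F]}(M)$ are invariant under this replacement. The $\D_{K[F]}$-closure $\widetilde H$ exists by Theorem~\ref{mainF2} for $K = \F_2$ and by Theorem~\ref{main} for $K = \Q$, and for arbitrary $K$ of characteristic $0$ or $2$ it follows by base change from those cases, using Lemma~\ref{extcuerpostf} together with Proposition~\ref{extfields}.

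The first serious step is to show that $\widetilde H$ is itself finitely generated. The presentation $0 \to K[F]^{\rk(H)} \to K[F]^n \to M \to 0$, which arises from $I_{K[H]} \cong K[H]^{\rk(H)}$, exhibits $M$ as finitely presented, so by Corollary~\ref{torfp} the submodule $\tor_{K[F]}(M) = {}^F I_{K[\widetilde H]}/{}^F I_{K[H]}$ is $\FL$ and in particular finitely generated as a $K[F]$-module. Lifting finitely many of its $K[F]$-generators back to elements $\widetilde h_1, \ldots, \widetilde h_k \in \widetilde H$ and setting $\widetilde H_0 = \langle H, \widetilde h_1, \ldots, \widetilde h_k\rangle$ produces a finitely generated $\widetilde H_0 \le \widetilde H$ with ${}^F I_{K[\widetilde H_0]} = {}^F I_{K[\widetilde H]}$, and Lemma~\ref{permutationmodule} then forces $\widetilde H_0 = \widetilde H$. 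With $\widetilde H$ finitely generated, the analogous presentation $0 \to K[F]^{\rk(\widetilde H)} \to K[F]^n \to \widetilde M \to 0$ is available and gives $\chi^{K[F]}(\widetilde M) = n - \rk(\widetilde H)$, while $\chi^{K[F]}(M) = n - \rk(H)$. Proposition~\ref{proptor} yields $\chi^{K[F]}(\tor_{K[F]}(M)) = -\beta_1^{K[F]}(M)$, so applying Proposition~\ref{FL} to the short exact sequence produces
$$n - \rk(\widetilde H) \;=\; \chi^{K[F]}(M) - \chi^{K[F]}(\tor_{K[F]}(M)) \;=\; n - \rk(H) + \beta_1^{K[F]}(M),$$
i.e.\ $\beta_1^{K[F]}(M) = \rk(H) - \rk(\widetilde H)$, with the byproduct $\beta_0^{K[F]}(M) = n - \rk(\widetilde H)$.

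It remains to check that $\rk(\widetilde H) = \bpi(H \le F)$. Since $\widetilde H$ is itself a subgroup of $F$ containing $H$, the inequality $\rk(\widetilde H) \ge \bpi(H \le F)$ is immediate. Conversely, for any finitely generated $L$ with $H \le L \le F$, the surjection $M \twoheadrightarrow I_{K[F]}/{}^F I_{K[L]}$ gives $\beta_0^{K[F]}(M) \ge \beta_0^{K[F]}(I_{K[F]}/{}^F I_{K[L]})$, and the presentation $0 \to K[F]^{\rk(L)} \to K[F]^n \to I_{K[F]}/{}^F I_{K[L]} \to 0$ combined with $\beta_1^{K[F]} \ge 0$ forces $\beta_0^{K[F]}(I_{K[F]}/{}^F I_{K[L]}) \ge n - \rk(L)$. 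Hence $n - \rk(\widetilde H) \ge n - \rk(L)$, i.e.\ $\rk(\widetilde H) \le \rk(L)$, so minimizing over $L$ gives $\rk(\widetilde H) \le \bpi(H \le F)$. Substituting into the formula for $\beta_1^{K[F]}(M)$ yields (\ref{equalitypi}). The main obstacle is the finite generation of $\widetilde H$, without which the identity $\chi^{K[F]}(\widetilde M) = n - \rk(\widetilde H)$ would be meaningless; this is precisely the step at which the rigidity of $F$ supplied by Theorems~\ref{main} and~\ref{mainF2} does the essential work.
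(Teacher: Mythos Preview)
Your proof is correct and follows essentially the same route as the paper: both arguments compute $\rk(\widetilde H)$ via Euler characteristics of $M$, $\tor_{K[F]}(M)$ and $M/\tor_{K[F]}(M)$, and then compare with $\bpi(H\le F)$ using the elementary inequality $\beta_0^{K[F]}(I_{K[F]}/{}^FI_{K[L]})\ge n-\rk(L)$ for any $H\le L\le F$. You are in fact more explicit than the paper on two points it leaves implicit---the reduction to finitely generated $F$ and the finite generation of $\widetilde H$ (your lifting argument, once phrased as ``choose finitely many elements of the generating set $\{\widetilde h-1:\widetilde h\in\widetilde H\}$ whose images already generate $\tor_{K[F]}(M)$'', is exactly what is needed).
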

\begin{proof}
First observe that if $P$ is the prime subfield of  a field $K$, then, by  \cref{extfields},   
$$\beta_1^{K[F]}(I_{K[F]}/{}^F I_{K[H]})=\beta_1^{P[F]}(I_{P[F]}/{}^FI_{P[H]}).$$ Thus, it is enough to show the theorem in the case where $K$ is a  prime field.

Notice that   for a finitely generated subgroup $L$ of $F$ containing $H$, we have that
\begin{multline*}
\rk(L)=\rk(F)-\chi^{K[F]}(I_{K[F]}/{}^FI_{K[L]})=\\
\rk (F)-\beta_0^{K[F]}(I_{K[F]}/{}^FI_{K[L]})+
\beta_1^{K[F]}(I_{K[F]}/{}^FI_{K[L]})\geqslant \\ \rk (F)-\beta_0^{K[F]}(I_{K[F]}/{}^FI_{K[L]})\geqslant  \rk (F)-\beta_0^{K[F]}(I_{K[F]}/{}^FI_{K[H]}),\end{multline*}
and so, 
\begin{multline}\label{onedirection}
\bpi(H\leq F)\geqslant \rk (F)-\beta_0^{K[F]}(I_{K[F]}/{}^FI_{K[H]})= \\ 
\chi^{K[F]} (I_{K[F]}/{}^FI_{K[H]})+\rk(H)-\beta_0^{K[F]}(I_{K[F]}/{}^FI_{K[H]})=\\
\rk(H)-\beta_1^{K[F]}(I_{K[F]}/{}^FI_{K[H]}).
\end{multline}
This implies  one direction in \cref{equalitypi}.

Now we prove  the result  for $K=\Q$, and so, when $K$ is a field of characteristic 0. A similar argument gives the theorem for fields of characteristic 2.

Let $\widetilde H$ be the $L^2$-closure of $H$ in $F$. Then
\begin{multline}\label{inclosed}
\bpi(H\leq F)\leq \rk(\widetilde H)=\rk(F)-\chi^{\Q[F]}(I_{\Q[F]}/{}^FI_{\Q[\widetilde H]})=\\ \rk(F)-\beta_0(I_{\Q[F]}/{}^FI_{\Q[\widetilde H]})= \rk(F)-\beta_0(I_{\Q[F]}/{}^FI_{\Q[ H]})=\\ \rk(H)-\beta_1^{\Q[F]}(I_{\Q[F]}/{}^FI_{\Q[H]}).\qedhere
\end{multline}
 \end{proof}
 In the case of fields $K$ of characteristic $p\ne 0,2$ we do not know that free groups are $\D_{K[F]}$-subgroup rigid, and, therefore, the previous proof cannot applied in these cases.

 The following corollary is a strong version of \cref{inertcompressed}.
\begin{cor}\label{strcom}
Let $F$ be a free group and $H$ a finitely generated subgroup. Then the following are equivalent.
\begin{enumerate}
\item $H$ is compressed in $F$;
\item $H$ is inert in $F$;
\item $H$ is strongly inert in $F$;
\item $H$ is $L^2$-independent in $F$; 
\item for any field $K$ of characteristic $0$ or $2$, $H$ is $\D_{K[F]}$-independent in $F$.
\end{enumerate}
\end{cor}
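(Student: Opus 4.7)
The plan is to arrange the five conditions into a cycle of implications, four of which are already in the text, and reduce the corollary to a single new step $(1)\Rightarrow (5)$ that will drop out directly from Theorem \ref{pibar}.

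The four free implications are the following. Condition (4) is exactly (5) specialised to the prime field $K=\Q$, using the identification $\D(F)\cong \D_{\Q[F]}$ of $\Q[F]$-rings recalled in Paragraph \ref{sect:universal}; this gives $(5)\Rightarrow (4)$. The proposition in Paragraph \ref{l2ind} is proved for an arbitrary field, so taking $K=\Q$ yields $(4)\Rightarrow (3)$. The implication $(3)\Rightarrow (2)$ is the observation in the Introduction that a strongly inert subgroup of a free group is automatically inert, and $(2)\Rightarrow (1)$ follows by taking $L=F$ in the definition of inertness.

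It remains to prove $(1)\Rightarrow (5)$. Assume $H$ is compressed in $F$, so $\bpi(H\le F)=\rk(H)$, and fix any field $K$ of characteristic $0$ or $2$. Theorem \ref{pibar} gives
\[
\rk(H)\;=\;\bpi(H\le F)\;=\;\rk(H)-\beta_1^{K[F]}(I_{K[F]}/{}^FI_{K[H]}),
\]
so $\beta_1^{K[F]}(I_{K[F]}/{}^FI_{K[H]})=0$. As noted immediately after the definition of $\D_{K[F]}$-independence in Paragraph \ref{l2ind}, this vanishing is equivalent to the injectivity of the canonical map $\D_{K[F]}\otimes_{K[H]} I_{K[H]}\to \D_{K[F]}\otimes_{K[F]} I_{K[F]}$, i.e.\ to $\D_{K[F]}$-independence of $H$. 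This closes the cycle.

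The entire weight of the argument sits in Theorem \ref{pibar}, whose proof invokes $\D_{K[F]}$-subgroup rigidity of $F$ in characteristic $2$ (Theorem \ref{mainF2}) and in characteristic $0$ (Theorem \ref{main}); granted these inputs, Corollary \ref{strcom} is essentially bookkeeping over the cycle above. The case of fields of odd positive characteristic does not appear in (5) precisely because subgroup rigidity is not yet established there, which is consistent with the remark after Theorem \ref{pibar}.
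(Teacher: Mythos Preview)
Your proof is correct and follows exactly the same approach as the paper: the implications $(5)\Rightarrow(4)\Rightarrow(3)\Rightarrow(2)\Rightarrow(1)$ are taken as already established in the text, and the closing implication $(1)\Rightarrow(5)$ is derived from Theorem \ref{pibar} via $\bpi(H\le F)=\rk(H)$ forcing $\beta_1^{K[F]}(I_{K[F]}/{}^FI_{K[H]})=0$. The only cosmetic difference is that you spell out the four ``free'' implications explicitly, whereas the paper simply refers back to the earlier discussion.
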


\begin{proof}
The implications (5)$\Rightarrow$(4)$\Rightarrow$(3)$\Rightarrow$(2)$\Rightarrow$(1) have been discussed before. 

We will prove now (1)$\Rightarrow$(5). Let $K$ be a field of characteristic 0 or 2. If $H$ is compressed, then $\bpi(H\leq F)=\rk(H)$. By \cref{pibar}, $\beta_1^{K[F]}(I_{K[F]}/{}^FI_{K[H]})=0$. Hence $H$ is $\D_{K[F]}$-independent in $F$.
\end{proof}

\subsection{Proof of \cref{strictlycompressed}}
Let $H$ be a subgroup of $F$. We say that $H$ is \textbf{$\mathcal{D}_{K[F]}$-closed} if  
\[
\tor_{K[F]}\!\bigl(I_{K[F]}/{}^F I_{K[H]}\bigr) = \{0\}.
\]
By \cref{equivdef}, a subgroup of $F$ is $L^2$-closed if and only if it is $\mathcal{D}_{\mathbb{Q}[F]}$-closed.  
In this section, we will show the following stronger version of \cref{strictlycompressed}.

\begin{cor}
Let $F$  a free group and $H$ a finitely generated subgroup of $F$. Then the following are equivalent.
\begin{enumerate}
\item $H$ is strictly compressed in $F$;
\item $H$ is strictly inert in $F$;
\item  $H$ is $L^2$-closed in $F$;
\item for any field $K$ of characteristic 0 or 2, $H$ is $\D_{K[F]}$-closed.
\end{enumerate}
\end{cor}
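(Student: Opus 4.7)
The plan mirrors the cyclic structure of the proof of Corollary \ref{strcom}, establishing $(4)\Rightarrow(3)\Rightarrow(2)\Rightarrow(1)\Rightarrow(4)$. The implications $(4)\Rightarrow(3)$ (take $K=\Q$) and $(2)\Rightarrow(1)$ (take $L=F$, so $L\cap H=H$) are formal. For $(1)\Rightarrow(4)$, fix $K$ of characteristic $0$ or $2$ and let $\widetilde H$ denote the $\D_{K[F]}$-closure guaranteed by Theorem \ref{main} or Theorem \ref{mainF2}; by Theorem \ref{pibar}, $\rk(\widetilde H)=\bpi(H\le F)$, and strict compression of $H$ forces the unique minimiser in $\{L:H\le L\le F\}$ to be $H$ itself, giving $\widetilde H=H$.

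Before attacking the main obstacle, I would upgrade Theorem \ref{pibar} to the statement that $\widetilde H$ is the \emph{maximum} element of $\Crit(H\le F)$. The argument is a $\beta$-count: for $L\in\Crit(H\le F)$, the surjection $I_{\Q[F]}/{}^FI_{\Q[H]}\twoheadrightarrow I_{\Q[F]}/{}^FI_{\Q[L]}$ induced by $H\le L$ combined with the identity used in the proof of Theorem \ref{pibar} forces $\beta_1^{\Q[F]}(I_{\Q[F]}/{}^FI_{\Q[L]})=0$. The long exact $\Tor$ sequence associated to $0\to{}^FI_{\Q[L]}/{}^FI_{\Q[H]}\to I_{\Q[F]}/{}^FI_{\Q[H]}\to I_{\Q[F]}/{}^FI_{\Q[L]}\to 0$ then shows that ${}^FI_{\Q[L]}/{}^FI_{\Q[H]}$ is $\D_{\Q[F]}$-torsion, hence contained in $\tor_{\Q[F]}(I_{\Q[F]}/{}^FI_{\Q[H]})={}^FI_{\Q[\widetilde H]}/{}^FI_{\Q[H]}$, and Lemma \ref{permutationmodule} yields $L\le\widetilde H$. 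As a formal consequence, in any free group and for any finitely generated subgroup, $L^2$-closed is equivalent to strictly compressed.

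The main obstacle is $(3)\Rightarrow(2)$. It reduces to showing that if $H$ is $L^2$-closed in $F$, then for every $L\le F$ (such that $L\cap H$ is finitely generated) $L\cap H$ is $L^2$-closed in $L$. Let $T$ be the $L^2$-closure of $L\cap H$ in the free group $L$ supplied by Theorem \ref{main}, so that $N:={}^LI_{\Q[T]}/{}^LI_{\Q[L\cap H]}$ is $\D_{\Q[L]}$-torsion. The crucial tensor-product step invokes Property (P4): the Lewin embedding $\D_L\hookrightarrow\D_{\Q[F]}$ (with $\D_L\cong\D_{\Q[L]}$) makes $\D_{\Q[F]}$ a right $\D_L$-module, so
\[
\D_{\Q[F]}\otimes_{\Q[L]}N\cong\D_{\Q[F]}\otimes_{\D_L}(\D_L\otimes_{\Q[L]}N)=0,
\]
and, combined with Lemma \ref{isomdiscr}, this shows that ${}^FI_{\Q[T]}/{}^FI_{\Q[L\cap H]}\cong\Q[F]\otimes_{\Q[L]}N$ is $\D_{\Q[F]}$-torsion. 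Hence $T\le S$, where $S$ denotes the $L^2$-closure of $L\cap H$ in $F$.

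To finish, I would verify that $S\le H$. The natural $\Q[F]$-linear map $I_{\Q[F]}/{}^FI_{\Q[L\cap H]}\to I_{\Q[F]}/{}^FI_{\Q[H]}$ (well defined because $L\cap H\le H$) carries the $\D_{\Q[F]}$-torsion submodule ${}^FI_{\Q[S]}/{}^FI_{\Q[L\cap H]}$ into the $\D_{\Q[F]}$-torsion-free module $I_{\Q[F]}/{}^FI_{\Q[H]}$ (torsion-freeness is precisely hypothesis (3)); the image therefore vanishes, so ${}^FI_{\Q[S]}\le{}^FI_{\Q[H]}$ and Lemma \ref{permutationmodule} gives $S\le H$. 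Combining with $T\le S$ yields $T\le L\cap S\le L\cap H$, and since $L\cap H\le T$ by construction, $T=L\cap H$. The equivalence $(1)\Leftrightarrow(3)$ established in the second paragraph, applied inside the free group $L$, promotes this to strict compression of $L\cap H$ in $L$, completing $(3)\Rightarrow(2)$. The Lewin tensor-product identity is the substantive step; the rest is routine manipulation of augmentation ideals.
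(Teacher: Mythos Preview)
Your argument is correct, but for $(3)\Rightarrow(2)$ the paper takes a much shorter route that you may want to know about. Rather than constructing the $L^2$-closure $T$ of $L\cap H$ inside $L$, transferring the torsion up to $F$, and then bounding it by the $L^2$-closure $S$ of $L\cap H$ in $F$, the paper simply observes that $I_{\Q[U]}/{}^UI_{\Q[H\cap U]}$ embeds as a $\Q[U]$-submodule of $I_{\Q[F]}/{}^FI_{\Q[H]}$ (this is the Mackey-type identity $\Q[U]\cap{}^FI_{\Q[H]}={}^UI_{\Q[U\cap H]}$), and that a $\D_{\Q[F]}$-torsion-free module is automatically $\D_{\Q[U]}$-torsion-free as a $\Q[U]$-module (since $\D_U\subseteq\D_{\Q[F]}$). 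This gives $L^2$-closedness of $H\cap U$ in $U$ in one line. Your detour through the maximum of $\Crit(H\le F)$ to obtain $(1)\Leftrightarrow(3)$ is also more than needed: the paper gets $(3)\Rightarrow(1)$ directly from Proposition \ref{positiveEuler}, since for $H\lneq L$ finitely generated one has $\rk(L)=\rk(H)+\chi^{\Q[F]}({}^FI_{\Q[L]}/{}^FI_{\Q[H]})>\rk(H)$, the module ${}^FI_{\Q[L]}/{}^FI_{\Q[H]}$ being a nonzero submodule of the torsion-free module $I_{\Q[F]}/{}^FI_{\Q[H]}\hookrightarrow\D_{\Q[F]}^{\rk F}$. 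Finally, your ``Lewin tensor-product identity'' does not actually require Property (P4): all you use is that $\D_L\subset\D_{\Q[F]}$ is a subring with $\D_L\cong\D_{\Q[L]}$ as $\Q[L]$-rings (Hughes uniqueness), so that $\D_{\Q[F]}\otimes_{\Q[L]}N\cong\D_{\Q[F]}\otimes_{\D_L}(\D_L\otimes_{\Q[L]}N)=0$; the injectivity statement in (P4) plays no role.
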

\begin{proof}
The implications (2)$\Rightarrow$(1) and (4)$\Rightarrow$(3)  are  clear. 

Let us first show that (3) $\Rightarrow$ (2).  
Let $H$ be an $L^2$-closed (equivalently, $\mathcal{D}_{\mathbb{Q}[F]}$-closed) subgroup of $F$, and let $U$ be an arbitrary subgroup of $F$.
Observe that since the left $\Q[F]$-module $I_{\Q[F]}/{}^F  I_{\Q[H]}$ is $\D_{\Q[F]}$-torsion-free, it is also $\D_{\Q[U]}$-torsion-free as a left $\Q[U]$-module.
Thus, since the left $\Q[U]$-module $I_{\Q[U]}/{}^UI_{\Q[H\cap U]}$ is a submodule of $I_{\Q[F]}/ {}^FI_{\Q[H]}$, $I_{\Q[U]}/{}^UI_{\Q[H\cap U]}$  is also $\D_{\Q[U]}$-torsion-free.

Thus, in order to show (3)$\Rightarrow$(2), it is enough to prove (3)$\Rightarrow$(1). Let $L$ be a finitely generated subgroup of $F$ properly containing $H$. Then
\begin{multline*}
\rk(L)=\rk(F) -\chi^{\Q[F]}(I_{\Q[F]}/{}^FI_{\Q[L]})
=\\
\rk(F) -\chi^{\Q[F]}(I_{\Q[F]}/{}^FI_{\Q[H]})+
\chi^{\Q[F]}({}^FI_{\Q[L]}/{}^FI_{\Q[H]})=\\
\rk(H)+\chi^{\Q[F]}({}^FI_{\Q[L]}/{}^FI_{\Q[H]})
\myges{\cref{positiveEuler}} \rk(H).
\end{multline*}
Now let us show (1)$\Rightarrow$(4). Let $H$ be a strictly compressed subgroup of $F$. Let $\widetilde H$ be its $\D_{K[F]}$-closure (which does not depend on the field $K$).  If  $H\ne \widetilde H$, then we should have $\rk(\widetilde H)>\rk (H)$. But this contradicts \cref{pibar}. Hence $H= \widetilde H$.
 \end{proof}

\subsection{Proof of \cref{crit}}
By  \cref{strcom}, $L_1$ and $L_2$ are $L^2$-independent in $F$. In particular they are inert. Hence $\rk(L_1\cap L_2)\le \rk (L_1)$, and so, $L_1\cap L_2\in \Crit(H\le F)$.

On the other hand, if $L\in  \Crit(H\le F)$, then
\begin{multline*}
\beta_0^{\Q[F]}({}^FI_{\Q[L]}/{}^FI_{\Q[H]})=\chi^{\Q[F]}({}^FI_{\Q[L]}/{}^FI_{\Q[H]})
+\beta_1^{\Q[F]}({}^FI_{\Q[L]}/{}^FI_{\Q[H]})\myle{\cref{0tor1}}\\
 \rk(L)-\rk(H)+\beta_1^{\Q[F]}(I_{\Q[F]}/{}^FI_{\Q[H]})\myeq{\cref{pibar}} 0.
 \end{multline*}
Thus, if $L=\langle L_1,L_2\rangle$, then
$$\beta_0^{\Q[F]}({}^FI_{\Q[L]}/{}^FI_{\Q[H]})\le \beta_0^{\Q[F]}({}^FI_{\Q[L_1]}/{}^FI_{\Q[H]})+\beta_0^{\Q[F]}({}^FI_{\Q[L_2]}/{}^FI_{\Q[H]})=0.$$
and
\begin{equation}\label{LH}
\beta_1^{\Q[F]}({}^FI_{\Q[L]}/{}^FI_{\Q[H]})\myge{\cref{0tor1}}\beta_1^{\Q[F]}({}^FI_{\Q[L_1]}/{}^FI_{\Q[H]})=\beta_1^{\Q[F]}( I_{\Q[F]}/{}^FI_{\Q[H]}).\end{equation}
Therefore, 
\begin{multline*}
\rk(L)=\rk(H)+\chi^{\Q[F]}({}^FI_{\Q[L]}/{}^FI_{\Q[H]})=\\
\rk(H)-\beta_1^{\Q[F]}({}^FI_{\Q[L]}/{}^FI_{\Q[H]})\myeq{\cref{pibar}}\\
 \bpi (H\leq F)+\beta_1^{\Q[F]}( I_{\Q[F]}/{}^FI_{\Q[H]})-\beta_1^{\Q[F]}({}^FI_{\Q[L]}/{}^FI_{\Q[H]})\myle{\cref{LH}}  \bpi(H\le F).
\end{multline*}
Hence $L\in \Crit (H\leq F)$.
\qed

\section{Final comments}
The paper raises several questions, which we will gather in this section.
Our proof of $\D_{\F_2[F]}$-subgroup rigidity of a free group $F$ depends on a peculiarity of the left  $\F_2[F]$-module $I_{\F_2[F]}$ (\cref{maxsubm}). However, we strongly believe that there are other methods that work for all primes.
\begin{Conj}
Let $F$ be a free group and $p$ an odd prime. Then $ F$ is $\D_{\F_p[F]}$-subgroup rigid. 
\end{Conj}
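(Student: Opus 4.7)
The plan is to mimic, as closely as possible, the strategy used for $p=2$ in the proof of Theorem~\ref{mainF2}. Two ingredients transfer verbatim: the reduction of a hypothetical counterexample to a minimal algebraic extension via Claims~\ref{nonex} and~\ref{indtor}, and the use of Proposition~\ref{proptor} to locate the $\D_{\F_p[L]}$-torsion inside a submodule whose quotient is trivial. Concretely, assuming $\mathcal S\ne\emptyset$, pick a minimal $L\in\mathcal S\cap\mathcal A_{H\le F}$, set $M=I_{\F_p[L]}/{}^LI_{\F_p[H]}$, and observe that $\beta_0^{\F_p[L]}(M/\tor_{\F_p[L]}(M))>0$ produces a nonzero $\F_p[L]$-map $\phi\colon I_{\F_p[L]}\to\F_p$ vanishing on ${}^LI_{\F_p[H]}$ and on $\tor_{\F_p[L]}(M)$. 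Such a $\phi$ factors through $I_{\F_p[L]}/I_{\F_p[L]}^2\cong L^{\ab}\otimes\F_p$, so it arises from a group homomorphism $\chi\colon L\to\F_p$ with $\chi(H)=0$; the kernel $U=\ker\chi$ is then an index-$p$ normal subgroup of $L$ containing $H$.

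The main obstacle appears precisely where Lemma~\ref{maxsubm} was applied. That lemma exploited a numerical miracle in characteristic $2$: $\ker\phi$ \emph{coincides} with ${}^LI_{\F_2[U]}$ because both have codimension $1$. For odd $p$, the kernel $\ker\phi$ still has codimension $1$, while ${}^LI_{\F_p[U]}$ has codimension $p-1$, and the quotient $\ker\phi/{}^LI_{\F_p[U]}$ is isomorphic to the unique codimension-$1$ submodule of $I_{\F_p[L/U]}\cong I_{\F_p[\Z/p]}$, a $(p-2)$-dimensional space all of whose composition factors are trivial. For the rest of the proof to go through one needs the stronger containment
\[
\tor_{\F_p[L]}(M)\subseteq {}^LI_{\F_p[U]}/{}^LI_{\F_p[H]}
\]
for some choice of index-$p$ normal $U\ge H$, not merely $\tor_{\F_p[L]}(M)\subseteq\ker\phi/{}^LI_{\F_p[H]}$. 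One natural attempt is to iterate down the composition series of $I_{\F_p[L/U]}$, which in $\F_p[\Z/p]$ is $I\supset I^2\supset\cdots\supset I^{p-1}\supset 0$ with each subquotient trivial, showing inductively that the image of $\tor_{\F_p[L]}(M)$ in each successive layer vanishes.

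A complementary and potentially cleaner strategy is to bypass the submodule-classification step by leveraging the already-established characteristic-$0$ closure. By Theorem~\ref{main}, the $\D_{\Q[F]}$-closure $\widetilde H$ of $H$ exists, and by Theorem~\ref{pibar} its rank equals the characteristic-independent invariant $\bpi(H\le F)$. The task then reduces to verifying directly that the same $\widetilde H$ is also the $\D_{\F_p[F]}$-closure, namely that ${}^FI_{\F_p[\widetilde H]}/{}^FI_{\F_p[H]}$ is $\D_{\F_p[F]}$-torsion and that $I_{\F_p[F]}/{}^FI_{\F_p[\widetilde H]}$ is $\D_{\F_p[F]}$-torsion-free. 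Since the $\Z[F]$-module $\tor_{\Z[F]}(I_{\Z[F]}/{}^FI_{\Z[H]})$ is $\FL$ over $\Z[F]$ by Corollary~\ref{torfp}, one may try to reduce its finite free $\Z[F]$-resolution modulo $p$ and combine the Euler-characteristic equality of Corollary~\ref{equalitypQEuler} with the vanishing already proved at $p=2$. The difficulty of this route is that inequality~(\ref{Qp}) only compares $\Q$ with $\F_p$, not two different primes, so an additional mechanism is required to transfer the $p=2$ information to odd $p$.

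I expect the genuine difficulty to be the disappearance of the numerical coincidence underlying Lemma~\ref{maxsubm}: any proof of the conjecture will have to supply a new argument — whether algebraic (a refined analysis of how $\D_{\F_p[F]}$-torsion interacts with iterated extensions by index-$p$ normal subgroups), combinatorial (using $p$-fold Stallings graph covers in place of the double covers implicit in the $p=2$ case), or homological (exploiting the $\FL$ structure of the integral torsion module) — forcing the torsion into the small induced ideal ${}^LI_{\F_p[U]}/{}^LI_{\F_p[H]}$ rather than merely into the ambient codimension-$1$ submodule $\ker\phi/{}^LI_{\F_p[H]}$.
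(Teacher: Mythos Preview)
The statement you are attempting is \emph{Conjecture~1} in the paper's final section; the paper does not prove it and explicitly flags it as open. So there is no ``paper's own proof'' to compare against, and your write-up is not a proof either: it is a (reasonable) discussion of why the $p=2$ argument breaks and what one might try instead. Everything you identify as an obstacle is genuine and matches the author's own diagnosis---he remarks that the proof of $\D_{\F_2[F]}$-subgroup rigidity ``depends on a peculiarity of the left $\F_2[F]$-module $I_{\F_2[F]}$ (Lemma~\ref{maxsubm})'' and that ``other methods'' would be needed for odd $p$.

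Concretely, the gap is exactly where you say it is. Your first route (iterating through the composition series $I\supset I^2\supset\cdots\supset I^{p-1}$ of $I_{\F_p[\Z/p]}$) does not close on its own: knowing that the image of $\tor_{\F_p[L]}(M)$ in one trivial layer vanishes does not imply it vanishes in the next, because the extensions between layers are nonsplit and nothing in Proposition~\ref{proptor} controls behaviour under such extensions. Your second route (showing the $\D_{\Q[F]}$-closure $\widetilde H$ also serves as the $\D_{\F_p[F]}$-closure) runs into the difficulty you yourself name: the inequality~(\ref{Qp}) only bounds $\beta_0^{\F_p}$ from below by $\beta_0^{\Q}$, so one gets ${}^FI_{\F_p[\widetilde H]}/{}^FI_{\F_p[H]}\subseteq\tor_{\F_p[F]}(I_{\F_p[F]}/{}^FI_{\F_p[H]})$ but not the reverse containment, and there is no mechanism in the paper for transferring information from $p=2$ to odd $p$. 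This is precisely the content of the paper's Conjecture~2, which splits the problem into the two halves you describe. In short: your analysis of the landscape is accurate, but neither branch you sketch currently yields a proof, and the paper does not claim otherwise.
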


The previous problem can be divided in two subproblems.
\begin{Conj}
Let $F$ be a free group and $p$ an odd prime. Let $H$ be finitely generated subgroup of $F$ and $\widetilde H$ its $L^2$-closure. Then 
\begin{enumerate}
\item $\beta_0^{\F_p[F]}(I_{\F_p[F]}/{}^FI_{\F_p[H]})=\beta_0^{\Q[F]}(I_{\Q[F]}/{}^FI_{\Q[H]})$ and
\item the left $\F_p[F]$-module $I_{\F_p[F]}/{}^FI_{\F_p[\widetilde H]}$ is $\D_{\F_p[F]}$-torsion-free.
\end{enumerate}
\end{Conj}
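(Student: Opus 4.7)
The plan is to establish $\D_{\F_p[F]}$-subgroup rigidity of $F$ for $p$ odd, from which both parts of the conjecture follow by the arguments of Section \ref{casep=2} and the proof of Theorem \ref{main} applied verbatim, with $\F_2$ replaced by $\F_p$ and the $p=2$ rigidity replaced by its $\F_p$-analogue. The strategy is to imitate the proof of Theorem \ref{mainF2} step by step. The only place in that proof where $p=2$ is used essentially is Lemma \ref{maxsubm}, which classifies codimension-one $\F_2[F]$-submodules of $I_{\F_2[F]}$ as exactly the ideals ${}^FI_{\F_2[U]}$ for subgroups $U\leq F$ of index $2$. So the whole question reduces to finding the correct replacement for this classification and then checking that the minimality induction on algebraic extensions still closes.

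For odd $p$, one-dimensional simple $\F_p[F]$-modules are parametrized by characters $\chi\colon F\to\F_p^*$, so codimension-one submodules of $I_{\F_p[F]}$ split into a trivial-character family, which again corresponds to index-$p$ subgroups by an identical counting argument, and character-twisted families indexed by pairs $(\chi,c)$ with $c$ a non-zero class in $H^1(F,V_\chi)$, where $V_\chi$ denotes the one-dimensional $\F_p[F]$-module attached to $\chi$. The first step of the plan is to work out this classification explicitly, using that $\F_p[F]$ has global dimension one and Fox-calculus descriptions of $H^1(F,V_\chi)$. The second step is to adapt the argument of Theorem \ref{mainF2}: at the analogue of Claim \ref{subgroupindex2}, one obtains a codimension-one submodule containing $\tor_{\F_p[L]}(M)$ corresponding to some pair $(\chi,c)$. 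If $\chi$ is trivial, the original argument runs unchanged with $U$ of index $p$ in place of index $2$. If $\chi$ is non-trivial, one passes to a finite extension $K=\F_{p^n}$ containing the image of $\chi$, in which setting Lemma \ref{extcuerpostf} lets one move freely between $\F_p$ and $K$, and one substitutes $\ker\chi$, a finite-index subgroup of $F$, for the role of $U$.

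The main obstacle will be the character-twist case. Claim \ref{indtor}, which is the engine that lets the torsion at level $L$ descend to the torsion at level $U$, rests on Proposition \ref{extabs} and on the fact that $\F_p[L]\otimes_{\F_p[U]}(\argu)$ preserves $\D_{\F_p[U]}$-torsion-freeness when passing from $U$ to $L$; both ingredients need to be re-derived in the presence of a character twist. More seriously, it is not clear that $\ker\chi$ lies in $\mathcal A_{H\leq L}$, so the minimality induction, which uses the finiteness of $\mathcal A_{H\leq F}$ from Proposition \ref{Afinite}, may fail to terminate. A promising workaround is to redefine algebraic extensions so as to include character twists and to prove a Takahasi-style finiteness for this enriched lattice; alternatively, one may try to absorb $\chi$ into an enlarged ambient group via a semidirect product $F\rtimes\Z/\mathrm{ord}(\chi)\Z$ and reduce to the trivial-character case inside a larger free-by-finite group, at the cost of generalizing the framework beyond free groups. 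Either route requires a substantial new ingredient, which is the reason the conjecture remains open in this paper.
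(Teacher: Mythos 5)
This statement is labelled a Conjecture in the paper and the paper offers no proof of it; it is presented precisely as an open subdivision of the problem of $\D_{\F_p[F]}$-subgroup rigidity for odd $p$. Your proposal is therefore not being measured against an existing argument, and, as you yourself acknowledge in your final sentence, it is a research plan rather than a proof. To your credit, you locate the obstruction exactly where the paper does: the proof of Theorem \ref{mainF2} uses $p=2$ only through Lemma \ref{maxsubm}, and for odd $p$ the codimension-one $\F_p[F]$-submodules of $I_{\F_p[F]}$ are no longer exhausted by the ideals ${}^FI_{\F_p[U]}$ with $[F:U]=p$, because non-trivial characters $F\to\F_p^{\times}$ and the associated twisted one-dimensional quotients appear. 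Your reduction of both parts of the conjecture to $\F_p$-subgroup rigidity is also sound, provided one additionally reruns the Euler-characteristic comparison from the proof of Theorem \ref{main} to identify the $\D_{\F_p[F]}$-closure with the $L^2$-closure $\widetilde H$; part (2) as stated refers to the latter, so this identification is not optional.

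The genuine gap is the character-twist case, and none of your two proposed workarounds is carried out. Passing to $U=\ker\chi$ breaks the induction because, as you note, there is no reason for $\ker\chi$ to interact with the finite set $\mathcal A_{H\leq L}$ of Proposition \ref{Afinite}, and replacing $L$ by a finite-index subgroup can strictly increase ranks, so the minimality argument over algebraic extensions does not obviously terminate. Likewise, Claim \ref{indtor} is proved via Lemma \ref{isomdiscr} and Proposition \ref{extabs} for untwisted induced modules of the form $k[L]\otimes_{k[U]}N_0$, and a twisted analogue would need a new statement and proof. The semidirect-product route leaves the class of free groups entirely, where properties (P1)--(P4) as used here are not established. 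So the proposal correctly diagnoses why the $p=2$ argument does not generalize but does not supply the missing ingredient; the conjecture remains open.
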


The invariant $\bpi(H\leq F)$ is related with the {\bf primitivity rank of $H$ in $F$} introduced by Puder and Parzanchevski \cite{PP15}:
$$\pi(H\leq F)=\min\{\rk(L): H\lneq L\le F,\ H \textrm{\ is not a free factor of\ }L\}.$$
It is clear that
$$    \bpi(H\leq F)=\min\{\pi(H\leq F),\rk(H)\}.
$$
By analogy, if $K$ is a field,  we can define
\begin{multline*}
    \pi_K(H\leq F)=\min\{\rk(N): {}^FI_{K[H]}\lneq N\le I_{K[F]},\\ {}^FI_{K[H]} \textrm{\ is not a free summand  of\ }N\}.\end{multline*}
The following conjecture is a variation of \cite[Conjecture 1.9]{EPS21}.
\begin{Conj}
Let $F$ be a free group and $H$ a finitely generated subgroup. Then for any field $K$, $\pi_K(H\leq F)=\pi(H\leq F)$.
\end{Conj}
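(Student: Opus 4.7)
My plan is to prove both inequalities $\pi_K(H\le F)\le \pi(H\le F)$ and $\pi(H\le F)\le \pi_K(H\le F)$ separately, after first reducing via Proposition \ref{extfields} to the case where $K$ is a prime field.

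For the upper bound $\pi_K(H\le F)\le \pi(H\le F)$, I would take $L$ realizing $\pi(H\le F)$ and use $N={}^FI_{K[L]}$ as a first candidate. An Euler characteristic computation based on the resolution $0\to {}^FI_{K[L]}\to K[F]\to K[F/L]\to 0$ and Shapiro-like calculations gives $\rk(N)=\rk(L)$, and the inclusion ${}^FI_{K[H]}\lneq N$ is clear. The module ${}^FI_{K[H]}$ is a free summand of $N$ exactly when ${}^FI_{K[L]}/{}^FI_{K[H]}\cong K[F]\otimes_{K[L]}(I_{K[L]}/{}^LI_{K[H]})$ is projective over $K[F]$, which by restriction back to $K[L]$ (splitting off the identity-orbit summand of $K[F]$ viewed as a right $K[L]$-module) is equivalent to $\beta_1^{K[L]}(I_{K[L]}/{}^LI_{K[H]})=0$, i.e., to $H$ being compressed in $L$. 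Thus the candidate works whenever $H$ is not compressed in $L$. When $H$ is compressed in $L$ but not a free factor (as in proper-power cases), the naive candidate fails, and a finer construction is needed: my approach is to modify $N$ by adjoining $K[F]\cdot x$ for a carefully chosen $x\in I_{K[F]}$ outside ${}^FI_{K[L]}$, or alternatively to shrink $N$ within ${}^FI_{K[L]}$, in each case aiming to produce a quotient with non-trivial $\beta_1^{K[F]}$ while controlling the rank via Proposition \ref{positiveEuler}.

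For the lower bound $\pi(H\le F)\le \pi_K(H\le F)$, given a candidate $N$ I would form the subgroup $L_N=\{g\in F:g-1\in N\}$, which contains $H$ and is genuinely a subgroup by virtue of the identity $g_1g_2-1=g_1(g_2-1)+(g_1-1)$. Combining the Euler characteristic identity $\rk(N)=\rk(H)+\beta_0^{K[F]}(N/{}^FI_{K[H]})-\beta_1^{K[F]}(N/{}^FI_{K[H]})$ with the analogous formula for ${}^FI_{K[L_N]}\subseteq N$ and the monotonicity of Betti numbers under submodule inclusion (Lemma \ref{0tor1}), I would bound $\rk(L_N)\le \rk(N)$ and verify that $H$ is not a free factor of $L_N$ by tracing the "not a free summand" hypothesis through the inclusion, yielding $\pi(H\le F)\le \rk(L_N)\le \rk(N)$.

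The main obstacle is ensuring strictness $H\lneq L_N$: a minimum-rank $N$ could strictly contain ${}^FI_{K[H]}$ without any new group element contributing to $L_N$, since submodules generated by elements such as $(g-1)^2$ provide module-level extensions disjoint from any subgroup-level extension. To resolve this, my strategy is to show that any minimum-rank $N$ can be replaced—without changing the rank and preserving the "not a free summand" property—by a maximal such submodule, and then to argue that such maximal witnesses must be of the form ${}^FI_{K[L]}$ for some subgroup $L\le F$. Unlike the characteristic-$2$ argument in Section \ref{casep=2}, which exploits the finite enumeration of codimension-one submodules of $I_{\F_2[F]}$ in Lemma \ref{maxsubm}, no analogous enumeration is available in arbitrary characteristic, so a genuinely new tool—likely drawn from the theory of Sylvester rank functions or the Hughes-free structure of $\D_{K[F]}$ (Property (P4))—will be required to close this gap.
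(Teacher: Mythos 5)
This statement is not proved in the paper: it appears in the final section as an open conjecture (a variant of \cite[Conjecture 1.9]{EPS21}), so there is no argument of the author's to compare yours against; the only question is whether your outline settles the problem, and it does not. In the direction $\pi_K(H\le F)\le\pi(H\le F)$, your key equivalence is incorrect: for $N={}^FI_{K[L]}$, the submodule ${}^FI_{K[H]}$ is a free summand iff the quotient ${}^FI_{K[L]}/{}^FI_{K[H]}\cong K[F]\otimes_{K[L]}(I_{K[L]}/{}^LI_{K[H]})$ is free, and this is strictly stronger than $\beta_1^{K[L]}(I_{K[L]}/{}^LI_{K[H]})=0$, because the quotient can be a nonzero $\D_{K[L]}$-torsion module with $\beta_0=\beta_1=0$. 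Concretely, for $H=\langle x^2\rangle\le L=\langle x\rangle$ one has $I_{K[L]}/{}^LI_{K[H]}\cong K[L]/K[L](x+1)\cong K$, which has vanishing Betti numbers but is neither zero nor free; so here the "naive candidate" ${}^FI_{K[L]}$ in fact \emph{works} even though $H$ is compressed in $L$ and not a free factor, and your case analysis rests on a false dichotomy. The genuine content of this direction is the implication that a free-summand inclusion ${}^FI_{K[H]}\le{}^FI_{K[L]}$ forces $H$ to be a free factor of $L$ (module-theoretic detection of algebraic extensions), which your outline does not address. Moreover, identifying "$\beta_1=0$" with "compressed" invokes Theorem \ref{pibar} and Corollary \ref{strcom}, which the paper establishes only for $K$ of characteristic $0$ or $2$; for odd characteristic this identification is itself one of the paper's open conjectures.

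In the direction $\pi(H\le F)\le\pi_K(H\le F)$, the step you defer --- replacing a minimal-rank witness $N$ by one of the form ${}^FI_{K[L]}$ with $H\lneq L$ --- is precisely the heart of the conjecture, and you state explicitly that a "genuinely new tool" would be required to close it. An outline whose central step is an unnamed future tool is a research plan, not a proof. Two further points in this direction are also unjustified: even granting $L_N=\{g\in F\colon g-1\in N\}\gneq H$, you still need $\rk(L_N)\le\rk(N)$ and the transfer of the "not a free summand" property from $N$ down to ${}^FI_{K[L_N]}$, and neither follows from the Betti-number monotonicity of Lemma \ref{0tor1} when ${}^FI_{K[L_N]}$ sits strictly inside $N$.
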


As mentioned in the introduction, we anticipate that our methods can be extended to establish $L^2$-subgroup rigidity not only for free groups. To provide a specific focus, we put forth the following conjecture.
\begin{Conj}
Locally indicable groups are $L^2$-subgroup rigid.
\end{Conj}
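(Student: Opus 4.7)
The plan is to imitate the two-phase argument used for free groups, exploiting the fact that locally indicable groups share the key division-ring infrastructure. First, by the work of Jaikin-Zapirain and L\'opez-\'Alvarez, for any field $K$ the group ring $K[G]$ of a locally indicable $G$ embeds in a Hughes-free division ring $\D_{K[G]}$ satisfying (P1)--(P4), including the crucial Lewin property (P4). This is exactly the algebraic input that drove Claim \ref{indtor} and Proposition \ref{extabs}, so those portions of the argument transfer verbatim. The plan is then to prove $\D_{K[G]}$-subgroup rigidity in some positive characteristic and bootstrap via (\ref{Qp}) and Corollary \ref{equalitypQEuler} exactly as in Section \ref{casep=2}.

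The positive-characteristic step will require two structural ingredients in $G$. First, an analog of Proposition \ref{Afinite}: a finiteness theorem for ``algebraic extensions'' $H\le L\le G$, meaning those $L$ admitting no proper free product splitting with $H$ on one side. For locally indicable groups this can plausibly be extracted from Bass--Serre theory, using that every finitely generated subgroup acts on its Grushko-type decomposition with locally indicable vertex stabilizers. Second, a replacement for Claim \ref{nonex}, in which a reduction to algebraic extensions was produced using the free factor decomposition; for general locally indicable groups this would be supplied by the Euler characteristic additivity of Proposition \ref{FL} applied to the short exact sequences arising from vertex decompositions. The bootstrap step itself should transfer once one verifies coherence of $\Z[G]$ and finite projective dimension of $\Z$-torsion-free finitely generated modules, which will hold under mild additional hypotheses such as finite cohomological dimension.

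The decisive obstacle is the analog of Lemma \ref{maxsubm}. For free groups over $\F_2$, codimension-$1$ submodules of $I_{\F_2[F]}$ are in bijection with index-$2$ subgroups of $F$, a combinatorial coincidence that translates a homological condition into a subgroup-theoretic one essentially for free. A locally indicable group has torsion-free abelianization, so characteristic $2$ yields no index-$2$ subgroups at all; however, every finitely generated subgroup $L\le G$ surjects onto $\Z$, providing an abundant supply of finite-index subgroups in every prime characteristic. The plan here is to fix a prime $p$ and, given a minimal hypothetical counterexample $L$, produce an indicable quotient $L\twoheadrightarrow \Z$ whose associated index-$p$ kernel carves off a nontrivial $\F_p[L]$-quotient of $I_{\F_p[L]}/{}^LI_{\F_p[H]}$ absorbing the torsion, then run the same inductive contradiction as in the proof of Theorem \ref{mainF2}. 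Choosing this indicable quotient compatibly with the torsion --- and ensuring that the resulting codimension-$p$ submodule is of the form ${}^LI_{\F_p[U]}/{}^LI_{\F_p[H]}$ for an actual subgroup $U\le L$ --- is the genuinely new input the conjecture will require, and is where the main work of any proof must concentrate.
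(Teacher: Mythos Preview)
This statement is a \emph{conjecture} in the paper (it appears in the final section of open problems), not a theorem; the paper offers no proof, so there is nothing to compare your proposal against. What you have written is a strategy sketch, and you yourself flag its central gap at the end.

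That said, the proposal contains a factual error and underestimates several further obstacles. The sentence ``A locally indicable group has torsion-free abelianization, so characteristic $2$ yields no index-$2$ subgroups at all'' is wrong on both counts: locally indicable groups need not have torsion-free abelianization (the Klein bottle group $\langle a,b\mid a^2b^2\rangle$ is poly-$\Z$, hence locally indicable, with abelianization $\Z\oplus\Z/2\Z$), and in any case a nontrivial torsion-free abelian group has many index-$2$ quotients, not none. The genuine obstruction to extending Lemma~\ref{maxsubm} is not a shortage of index-$2$ subgroups but the failure of the counting coincidence: for free $F$ of rank $d$, both the index-$2$ subgroups and the codimension-$1$ $\F_2[F]$-submodules of $I_{\F_2[F]}$ number exactly $2^d-1$, forcing a bijection; for a general group there is no reason these counts should match.

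Your other ingredients are also unavailable at the stated generality. No analog of Takahasi's finiteness theorem (Proposition~\ref{Afinite}) is known for locally indicable groups, and it cannot ``plausibly be extracted from Bass--Serre theory'' without substantial new ideas. Coherence of $\Z[G]$ and projective dimension at most $1$ for $\Z$-torsion-free modules (Proposition~\ref{FLZF}) both depend on $F$ having cohomological dimension $1$; ``mild additional hypotheses such as finite cohomological dimension'' do not recover them, and restricting to cohomological dimension $1$ over $\Z$ brings you back to free groups by Stallings--Swan. The paper records the statement as a conjecture precisely because none of these steps transfers routinely.
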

On the other hand we expect that there are non $L^2$-subgroup rigid groups.
\begin{Problem}
Construct non $L^2$-subgroup rigid (torsion-free) groups.
\end{Problem}

\bibliographystyle{amsalpha}
\bibliography{biblio}

\end{document}